\newcommand{\R}{\mathbb{R}}
\newtheorem{theorem}{Theorem}
\newtheorem{lemma}{Lemma}
\newtheorem{assumption}{Assumption}
\newcommand{\eqdef}{\stackrel{\text{def}}{=}}
\def\<#1,#2>{\left\langle #1,#2\right\rangle}
\title{A Stochastic Derivative-Free Optimization Method with Importance Sampling: Theory and Learning to Control}
\author{Adel Bibi\textsuperscript{\rm 1}, El Houcine Bergou\textsuperscript{\rm 1,2}, Ozan Sener\textsuperscript{\rm 3}, Bernard Ghanem\textsuperscript{\rm 1} and Peter Richt\'arik\textsuperscript{\rm 1,4} \\
\textsuperscript{\rm 1}King Abdullah University of Science and Technology\\
\textsuperscript{\rm 2} MaIAGE, INRA, Universit\'e Paris-Saclay\\
\textsuperscript{\rm 3}Intel Labs\\
\textsuperscript{\rm 4}Moscow Institute of Physics and Technology\\
\{adel.bibi,houcine.bergou,bernard.ghanem,peter.richtarik\}@kaust.edu.sa, ozan.sener@intel.com}
\begin{document}

\maketitle

\begin{abstract} 
    We consider the problem of unconstrained minimization of a smooth objective function in $\R^n$ in a setting where only function evaluations are possible. While importance sampling is one of the most popular techniques used by machine learning practitioners to accelerate the convergence of their models when applicable, there is not much existing theory for this acceleration in the derivative-free setting. In this paper, we propose the first derivative free optimization method with importance sampling and derive new improved complexity results on non-convex, convex and strongly convex functions. We conduct extensive experiments on various synthetic and real LIBSVM datasets confirming our theoretical results. We test our method on a collection of continuous control tasks on MuJoCo environments with varying difficulty. Experiments show that our algorithm is practical for high dimensional continuous control problems where importance sampling results in a significant sample complexity improvement.
\end{abstract}
\section{Introduction}
In this paper, we consider the optimization problem
\begin{equation}\label{eq:mainP}
\min_{x\in \R^n} f(x),
\end{equation}
where $f: \R^n \rightarrow \R$ is a ``smooth'' but not necessarily convex  function, bounded from below and it achieves its global minimum at some $x_*\in \R^n$. In particular, we enforce throughout the paper the following smoothness assumption: \begin{assumption}\label{ass:M-smooth} 
The objective function $f$ has coordinate-wise Lipschitz gradient, with Lispchitz constants $L_1,\dots,L_n>0$. Moreover, $f$ is bounded from below by $f(x_*)\in \R$. That is, $f$ satisfies
\begin{align*}   
f(x_*) \leq f(x+t e_i) \le f(x) + \nabla_i f(x) t  + \frac{L_i }{2}t^2 
\end{align*}
for all $x \in \R^n$ and $t\in \R$, where $\nabla_i f(x)$ is the $i$th partial derivative of $f$ at $x$.
\end{assumption} 

\noindent \textbf{DFO.}
We consider the Derivative-Free Optimization (DFO) \cite{Conn_2009,Kolda_2003} setting. That is, we assume that the derivatives of $f$ are numerically impractical to obtain, unreliable (e.g., noisy {\em function} evaluations \cite{chen2015stochastic}), or not available at all. In typical DFO applications, evaluations of $f$ are possible through runs/simulations of some black-box software only. Optimization problems of this type appear in many applications, including computational medicine \cite{Marsden_2008}, fluid-dynamics \cite{Allaire_2001,Haslinger_2003}, localization \cite{Marsden_2004,Marsden_2007} and continuous control \cite{Mania_2018,Salimans_2017}.

 Literature on DFO methods for solving (\ref{eq:mainP}) has a long history. Some of the first approaches were based on deterministic direct search (DDS) \cite{Hooke_1961}. Subsequently, additional variants of DDS, including randomized  approaches, were proposed in \cite{Matyas_1965,Karmanov_1974a,Karmanov_1974b,Baba_1981,Dorea_1983,Sarma_1990}. However,  complexity bounds for deterministic direct search methods have only been established recently by the works of \cite{Vicente_2013,garmanjani2013smoothing,KR-DFO2014,Vicente_2016}.
 Recently, complexity bounds have also been derived for randomized methods \cite{Diniz_2008,Stich_2011,Gratton_2015}. For instance, the work of \cite{Diniz_2008,Gratton_2015} imposes a decrease condition on whether to accept or reject a step of a set of random directions. Moreover, \cite{Nesterov_Spokoiny_2017,Dvurechensky_Gasnikov_Gorbunov_2018} derived new complexity bounds for accelerated random search. 
 
More recently, Bergou et. al. proposed a new randomized direct search method called {\em Stochastic Three Points} (\texttt{STP}) method. \texttt{STP}, in each iteration $k$, generates a random search direction $s_k$ according to a certain probability law, then compares the objective function at three points: current iterate $x_k$, a  point $x_+ = x_k+\alpha_k s_k$ in the direction of $s_k$ and a point $x_- = x_k - \alpha_k s_k$ in the direction of $-s_k$. The method then chooses the best of these three points as the new iterate:
 \[x_{k+1} = \arg\min \{f(x_k),  f(x_+), f(x_-)\}.\]

 {\bf Notation: }
As for the notations, $\mathbb{E} \left[\cdot \right]$ denotes the expectation operator. The standard inner product is defined as $\langle x,y \rangle = x^\top y$. We also denote the $\ell_1$-norm and $\ell_2$-norm by $\| \cdot \|_1$ and $\| \cdot \|_2$, respectively. We define $L = \max_i L_{i}$ for a given sequence of scalars $L_1,\ldots,L_n$.

\begin{algorithm}[t]
\caption{{\bf Stochastic  Three Points Method with Importance Sampling}  (\texttt{STP}$_{\text{\texttt{IS}}}$)}
\label{alg:STP_IS}
\begin{rm}
\begin{description}
\item[]
\item[Initialization] \ \\
Choose initial iterate $x_0\in \R^n$, stepsize parameters $v_1,\dots,v_n > 0$ and probabilities  $p_1,\dots,p_n >0$ summing up to 1. 
\item[For $k=0,1,2,\ldots$] \ \\
\vspace{-2ex}
\begin{enumerate}
\item Select $i_k=i$ with probability $p_i>0$. 
\item Choose stepsize $\alpha_{i_k}$ proportional to $1/v_{i_k}$.
\item Let $x_+ = x_k+\alpha_k e_{i_k}$ and $x_- = x_k - \alpha_k e_{i_k}$
\item  $x_{k+1} = \arg \min \{f(x_k), f(x_+), f(x_-)\}$
\end{enumerate}
\end{description}
\end{rm}
\end{algorithm}

\section{Paper Overview and Contributions}
While importance sampling, a term that typically refers to the nonuniform sampling of random directions in stochastic algorithms, has been widely investigated in gradient based methods \cite{zhao2015stochastic,qu2015quartz,richtarik2016optimal,NIPS2017_7025}, to the best of our knowledge {\em there exists no work on importance sampling in the random direct search setting.} To this end, we study \texttt{STP} and analyze its complexity with arbitrary probabilities. In particular, we restrict the random directions to be sampled from discrete distributions, i.e., in each iteration of \texttt{STP} a random direction $s_k$ from a finite set of independent directions $\{b_1,\dots,b_n\}\subset \R^n$ is sampled. That is, we set $s_k=b_i$ with  probability $p_i>0$. We then propose new sampling strategies that are either optimal or at least improve the complexity bounds, i.e., {\em importance sampling}.

\subsection{Coordinate directions}
\label{sec:coordinate_directions}
Without loss of generality, we  only consider  directions in the canonical basis of $\R^n$, i.e., $e_1,\dots,e_n$. The general case can be recovered via a linear change of variables: $x=By$, where $B\in \R^{n\times n}$. Indeed, consider the problem
\begin{equation}\label{eq:hg89f8g9f}\min_{y\in \R^n} f_B(y) \eqdef f(By)\end{equation}
instead. A coordinate update $y_{k+1} = y_k + \alpha_k e_i$ for the re-parameterized problem \eqref{eq:hg89f8g9f} corresponds to updates of the form $x_{k+1} = x_k + \alpha_k b_{i}$, where $b_i$ is the $i$th column of $B$, for the original problem \eqref{eq:mainP}. In light of the above discussion, the newly proposed algorithm dubbed \texttt{STP}$_{\text{\texttt{IS}}}$ is formally described as Algorithm \ref{alg:STP_IS}.

\subsection{Complexity bounds}

To the best of our knowledge, ours are the {\em first complexity bounds (bounds on the number of iterations) for a DFO method with importance sampling}. We design importance sampling that improves the worst-case iteration complexity bounds compared to state-of-the-art algorithms. These bounds have the same dependence on the precision $\epsilon$ as classical bounds in the literature,  i.e.  $1/\epsilon^2$ for non-convex $f$, $1/\epsilon$ for convex $f$ and $\log(1/\epsilon)$ for strongly convex $f$; see for instance   \cite{Bergou_2018,Nesterov_Spokoiny_2017}. However, the leading  constant, which is often the bottleneck in practical performance, especially when low or medium accuracy solutions are acceptable, is improved and often dramatically so. Typically, the improvement is via replacing the maximum Lipschitz constant of the gradient by the average Lipschitz constants of all coordinates (see Theorems \ref{thm:nonconvex1}, \ref{thm:nonconvex2}, \ref{thm:convex2}, and \ref{thm:stronglyconvex2}). The improvement we obtain is similar to the improvement obtained by importance sampling in stochastic coordinate (gradient) descent methods~ \cite{zhao2015stochastic,qu2015quartz,richtarik2016optimal}. Table~\ref{tab:sumcompl} summarizes complexity results obtained in this paper for \texttt{STP} and for \texttt{STP}$_{\text{\texttt{IS}}}$. The assumptions in Table~\ref{tab:sumcompl} are in addition to Assumption \ref{ass:M-smooth}.

\subsection{Empirical results} In addition to our theoretical analysis, we conduct extensive testing to show the efficiency of the proposed method in practice. We use both synthetic and real\footnote{We use several LIBSVM datasets~\cite{chang2011libsvm}.} datasets for ridge regression and squared SVM problems. In the non-convex case, we use continuous control tasks from the MuJoCo~\cite{Todorov_2012} suite following the recent success of DFO  compared to model-free RL~\cite{Mania_2018,Salimans_2017}. Results show that {\em our approach leads to huge speedups compared against uniform sampling, the improvement can reach several orders of magnitude and comparable or better than state-of-art policy gradient methods.}

\begin{table*}[t]
\centering
\renewcommand{\arraystretch}{1.25}{
\begin{tabular}{c|c|c|c|c}
\toprule
Assumptions on $f$    & \begin{tabular}{@{}c@{}}Uniform Sampling \\ Complexity\end{tabular}  & \begin{tabular}{@{}c@{}}Importance\\ Sampling\end{tabular} & \begin{tabular}{@{}c@{}}Importance Sampling \\Complexity [NEW]\end{tabular} & Theorem  \\ 
  \midrule
None & $\frac{4\sqrt{2} r_0{n^2L}}{\epsilon^2}$ & $p_i =  \frac{\sqrt{L_i}}{\sum_{i=1}^n \sqrt{L_i}}$ & $\frac{4\sqrt{2} r_0{\left(\sum_{i=1}^n \sqrt{L_i}\right)^2}}{\epsilon^2}$ 
& \ref{thm:nonconvex1} \\

None & $\frac{4\sqrt{2} r_0{n^2L}}{\epsilon^2}$ & $p_i = \frac{L_i}{\sum_{i=1}^n L_i} $ & $ \frac{4\sqrt{2} r_0{n\left(\sum_{i=1}^n L_i\right)}}{\epsilon^2} $ & \ref{thm:nonconvex2} \\

Convex, $R_0 < \infty$  & $8R_0^2{n^2L}\left(\frac{1}{\epsilon}- \frac{1}{r_0}\right)$ & $p_i = \frac{L_i}{\sum_{i=1}^n L_i} $ & $8R_0^2{n\sum_{i=1}^n L_i}\left(\frac{1}{\epsilon}- \frac{1}{r_0}\right)$ & \ref{thm:convex2} \\

$\lambda$-strongly convex  & $\frac{{nL}}{\lambda}\log\left(\frac{r_0}{\epsilon}\right)$ & $p_i =  \frac{L_i}{\sum_{i=1}^n L_i} $ & $\frac{{\sum_{i=1}^n L_i}}{\lambda}\log\left(\frac{r_0}{\epsilon}\right)$ & \ref{thm:stronglyconvex2}
\\
\bottomrule
\end{tabular}
}
\caption{Summary of the new derived complexity results as opposed to uniform sampling where $r_0 = f(x_0) - f(x_*)$. The assumptions listed are in addition to Assumption \ref{ass:M-smooth}. $R_0 < \infty$ indicates a bounded level set where the exact definition is given in Assumption \ref{ass:level_sets}. The key differences in complexity between the uniform and importance sampling are detailed in text.}
\label{tab:sumcompl}
\end{table*} 
\section{Non-Convex Case}\label{sec:nonconv}

This section describes our complexity results for Algorithm~\ref{alg:STP_IS} in the case when $f$ is allowed to be non-convex. We show that this method guarantees complexity bounds with the same order in $\epsilon$ as classical bounds in the literature, i.e., $1/\epsilon^2$ with an improved dependence on the Lipschitz constant. All proofs are left for the \textit{appendix}.

\begin{theorem}\label{thm:nonconvex1}
Let Assumption~\ref{ass:M-smooth} be satisfied. Choose $\alpha_{i_k}=\tfrac{\alpha_0}{v_{i_k} \sqrt{k+1}}$ where $\alpha_0>0$. If 
\begin{equation}
\label{eq:isjss8sus} 
K\geq \frac{2 \left( \frac{\sqrt{2}(f(x_0)-f(x_*))}{\alpha_0} + \frac{\alpha_0 \sum_{i=1}^n  \frac{p_i {L_i}}{{v_{i}^2}}}{2} \right)^2}{\left({\min_i \frac{p_i}{v_i}}\right)^2 \epsilon^2},
\end{equation}
then  $\displaystyle \min_{k=0,1,\dots,K} \mathbb{E} \left[ \|\nabla f(x_k)\|_{1} \right] \leq \epsilon.$
\end{theorem}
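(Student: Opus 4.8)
The plan is to convert the coordinate-wise descent guaranteed by Assumption~\ref{ass:M-smooth} into a recursion on $\mathbb{E}[f(x_k)]-f(x_*)$, take expectation over the randomly sampled coordinate, and then telescope over $k=0,\dots,K$ with a rescaling that neutralizes the decaying stepsize. \emph{Step 1 (one-step descent).} Condition on $x_k$ and on the event $\{i_k=i\}$, and set $\alpha_k\eqdef\alpha_0/(v_{i_k}\sqrt{k+1})$. Applying Assumption~\ref{ass:M-smooth} at $x_k$ along $e_{i_k}$ with $t=\pm\alpha_k$ gives $f(x_\pm)\le f(x_k)\pm\alpha_k\nabla_{i_k}f(x_k)+\tfrac{L_{i_k}}{2}\alpha_k^2$, hence $\min\{f(x_+),f(x_-)\}\le f(x_k)-\alpha_k|\nabla_{i_k}f(x_k)|+\tfrac{L_{i_k}}{2}\alpha_k^2$. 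Since $x_{k+1}$ is the best of $\{x_k,x_+,x_-\}$, I obtain both $f(x_{k+1})\le f(x_k)$ and
\[ f(x_{k+1})\le f(x_k)-\frac{\alpha_0}{v_{i_k}\sqrt{k+1}}\,|\nabla_{i_k}f(x_k)|+\frac{\alpha_0^2 L_{i_k}}{2\,v_{i_k}^2\,(k+1)}. \]

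\emph{Step 2 (expectation over the coordinate).} Taking expectation over $i_k\sim(p_1,\dots,p_n)$ and using $\sum_{i=1}^n\tfrac{p_i}{v_i}|\nabla_i f(x_k)|\ge\big(\min_i\tfrac{p_i}{v_i}\big)\|\nabla f(x_k)\|_1$, then taking total expectation, I get --- writing $r_0\eqdef f(x_0)-f(x_*)$, $\delta_k\eqdef\mathbb{E}[f(x_k)]-f(x_*)\ge 0$, $g_k\eqdef\mathbb{E}[\|\nabla f(x_k)\|_1]$, $c\eqdef\min_i\tfrac{p_i}{v_i}$ and $D\eqdef\sum_{i=1}^n\tfrac{p_i L_i}{v_i^2}$ --- the recursion $\delta_{k+1}\le\delta_k-\tfrac{\alpha_0 c}{\sqrt{k+1}}g_k+\tfrac{\alpha_0^2 D}{2(k+1)}$, together with the monotonicity $\delta_{k+1}\le\delta_k$, so that $0\le\delta_k\le\delta_0=r_0$ for every $k$.

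\emph{Step 3 (telescoping with rescaling).} Rearranging, $\tfrac{\alpha_0 c}{\sqrt{k+1}}g_k\le\delta_k-\delta_{k+1}+\tfrac{\alpha_0^2 D}{2(k+1)}$; multiplying by $\sqrt{k+1}$ and summing $k=0,\dots,K$ yields $\alpha_0 c\sum_{k=0}^K g_k\le\sum_{k=0}^K\sqrt{k+1}(\delta_k-\delta_{k+1})+\tfrac{\alpha_0^2 D}{2}\sum_{k=0}^K(k+1)^{-1/2}$. A summation-by-parts step bounds $\sum_{k=0}^K\sqrt{k+1}(\delta_k-\delta_{k+1})\le\delta_0+\sum_{k=1}^K(\sqrt{k+1}-\sqrt{k})\delta_k\le r_0\big(1+\tfrac12\sum_{k=1}^K k^{-1/2}\big)$, using $\sqrt{k+1}-\sqrt{k}\le\tfrac1{2\sqrt{k}}$ and $\delta_k\le r_0$; both this quantity and $\sum_{k=0}^K(k+1)^{-1/2}$ are $O(\sqrt{K})$ by the elementary bound $\sum_{k=1}^K k^{-1/2}\le 2\sqrt{K}$. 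Combining with $\sum_{k=0}^K g_k\ge(K+1)\min_{0\le k\le K}g_k$ produces an inequality of the shape $\min_{0\le k\le K}\mathbb{E}[\|\nabla f(x_k)\|_1]\le\tfrac{\sqrt2}{c\sqrt{K}}\big(\tfrac{\sqrt2\,r_0}{\alpha_0}+\tfrac{\alpha_0 D}{2}\big)$; setting the right-hand side $\le\epsilon$ and solving for $K$ gives exactly \eqref{eq:isjss8sus}.

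\emph{Main obstacle.} The genuinely delicate point is Step 3: because the stepsize decays like $1/\sqrt{k+1}$, the noise-type residual sum is $\sum_{k\le K}\tfrac1{k+1}=\Theta(\log K)$, and a naive telescoping would contaminate the bound with a spurious logarithm and spoil the clean $1/\epsilon^2$ rate. Removing it crucially exploits a feature specific to \texttt{STP}-type methods --- that $f(x_k)$ is monotonically non-increasing, so $\delta_k\le r_0$ uniformly --- which is precisely what lets the rescaled summation-by-parts collapse to an $O(\sqrt{K})$ bound. The remaining work is bookkeeping: the factor $\min_i p_i/v_i$ is the unavoidable cost of passing from the weighted gradient sum to the $\ell_1$-norm, and the quantities $c$ and $D$ in \eqref{eq:isjss8sus} are exactly the knobs later optimized over $p$ and $v$ to obtain the importance-sampling rates of Table~\ref{tab:sumcompl}.
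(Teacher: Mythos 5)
Your Steps 1 and 2 coincide with the paper's: coordinate-wise smoothness applied at $\pm\alpha_{i_k}e_{i_k}$ together with the three-point minimum (the paper's Lemma~\ref{lemma:key_lem}) and the two expectation bounds give exactly the paper's recursion $\delta_{k+1}\le\delta_k-\frac{\alpha_0 c}{\sqrt{k+1}}g_k+\frac{\alpha_0^2 D}{2(k+1)}$ with $c=\min_i p_i/v_i$ and $D=\sum_i p_iL_i/v_i^2$. Where you diverge is Step~3. The paper does not telescope globally: it rearranges to isolate $g_k$, notes that $\sum_{j=l}^{2l}(\delta_j-\delta_{j+1})\le r_0$ with $l=\lfloor K/2\rfloor$, and by pigeonhole selects a single index $j$ in the second half with $\delta_j-\delta_{j+1}\le r_0/(l+1)$; since $j+1\ge K/2$ there, both the progress term and the noise term at that one index are $O(1/\sqrt{K})$ with no logarithm. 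Your weighted telescoping with Abel summation is a genuine alternative that also removes the log (correctly exploiting the monotonicity $\delta_k\le r_0$), and it even controls the average of $g_k$ over all iterations rather than certifying one good index.

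However, your concluding claim --- that the telescoping yields $\min_k g_k\le\frac{\sqrt2}{c\sqrt K}\bigl(\frac{\sqrt2\,r_0}{\alpha_0}+\frac{\alpha_0 D}{2}\bigr)$ and hence "exactly" \eqref{eq:isjss8sus} --- does not hold. Carrying out your own estimates, $\sum_{k=0}^K\sqrt{k+1}(\delta_k-\delta_{k+1})\le r_0(1+\sqrt K)$ and $\sum_{k=0}^K(k+1)^{-1/2}\le 2\sqrt{K+1}$, so dividing by $\alpha_0 c(K+1)$ gives
\begin{equation*}
\min_{0\le k\le K}g_k \;\le\; \frac{r_0(1+\sqrt K)}{\alpha_0 c(K+1)}+\frac{\alpha_0 D}{c\sqrt{K+1}}.
\end{equation*}
The first term is at most the paper's $\frac{2r_0}{\alpha_0 c\sqrt K}$ (with slack), but the second term exceeds the paper's $\frac{\alpha_0 D}{\sqrt2\,c\sqrt K}$ by a factor approaching $\sqrt2$: averaging the noise term $\frac{\alpha_0^2D}{2\sqrt{k+1}}$ over all $K+1$ iterations costs $\approx\frac{\alpha_0 D}{\sqrt K}$, whereas evaluating it only at an index $j\ge K/2$ costs $\frac{\alpha_0 D}{\sqrt{2K}}$. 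Concretely, if $\alpha_0$ is large enough that the $D$-term dominates, the $K$ of \eqref{eq:isjss8sus} only certifies $\min_k g_k\le\sqrt2\,\epsilon$ under your inequality. So your route proves the theorem with $K$ inflated by at most a factor of $2$ --- same rate, same importance-sampling conclusions --- but not with the stated constant. To recover it, either restrict your telescoping sum to $k\in\{l,\dots,2l\}$ so that every weight $(k+1)^{-1/2}$ in the noise sum is at most $\sqrt{2/K}$, or adopt the paper's pigeonhole step.
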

 Note that the complexity depends on $\alpha_0$. The optimal choice of $\alpha_0$ minimizing \eqref{eq:isjss8sus} is $\alpha_0^* = 8^{1/4}\sqrt{\frac{{f(x_0)-f(x_*)}}{\sum_{i=1}^n  \frac{p_i {L_i}}{{v_{i}^2}}}},$ in which case the complexity bound \eqref{eq:isjss8sus} takes the form
\begin{equation*} \frac{4\sqrt{2}\left(f(x_0)-f(x_*)\right) \sum_{i=1}^n  \frac{p_i {L_i}}{{v_{i}^2}}}{\left( \min_i \frac{p_i}{v_i}\right)^2 \epsilon^2}.\end{equation*}

{\bf Importance sampling.} The complexity depends on the choice of the probabilities $\left\{p_i\right\}_{i=1}^n$ and the quantities $\left\{v_i\right\}_{i=1}^n$. For instance, if $p_i = \frac{\sqrt{L_i}}{\sum_{i=1}^n \sqrt{L_i}}$
 and $v_i = \sqrt{L_i}$, then the complexity becomes
 \begin{equation}\label{eq:isjss8sus-optimal-1} \frac{4\sqrt{2}(f(x_0)-f(x_*)) \left(\sum_{i=1}^n \sqrt{L_i} \right)^2}{ \epsilon^2}.
 \end{equation}
 On the other hand, if $p_i = \frac{{L_i}}{\sum_{i=1}^n {L_i}}$
 and $v_i = {L_i}$ then the complexity becomes
 \begin{equation}\label{eq:isjss8sus-optimal-2} \frac{4\sqrt{2}(f(x_0)-f(x_*)) n \left(\sum_{i=1}^n {L_i} \right)}{ \epsilon^2}.
 \end{equation} 

Under the choice of uniform sampling, i.e. $p_i = \frac{1}{n}$ and the choice $v_i = L$, we recover the uniform sampling complexity of \cite{Bergou_2018} $$\frac{4\sqrt{2}(f(x_0)-f(x_*))n^2 L}{ \epsilon^2}.$$
Note that $ \left(\sum_{i=1}^n \sqrt{L_i}\right)^2 \le n^2 L$ and $n \left(\sum_{i=1}^n {L_i} \right) \le n^2 L$. Therefore, complexity bounds in the number of iterations is improved with the proposed importance sampling strategies (\ref{eq:isjss8sus-optimal-1}) and (\ref{eq:isjss8sus-optimal-2}). We now state a complexity theorem for Algorithm~\ref{alg:STP_IS} when using non-decreasing stepsizes.

\begin{theorem}\label{thm:nonconvex2} 
Let Assumption~\ref{ass:M-smooth} be satisfied. Choose $\alpha_{i_k}= \frac{\epsilon}{n v_{i_k}}$ where $\sum_{i=1}^n \frac{p_i L_i}{v_i^2} < 2n \left(\min_i \frac{p_i}{v_i}\right)$. If
\begin{equation}
\label{eq:complk}
   K \ge   \frac{2 n (f(x_0)-f(x_*))}{\left(\min_i \frac{p_i}{v_i} \right) \left(1 -\frac{\sum_{i=1}^n  \frac{p_i {L_i}}{{v_{i}^2}}}{2n\left(\min_i \frac{p_i}{v_i}\right)} \right)  \epsilon^2},
\end{equation}
then $\displaystyle \min_{k=0,1,\dots,K} \mathbb{E} \left[ \|\nabla f(x_k)\|_{1} \right] \leq \epsilon.$
\end{theorem}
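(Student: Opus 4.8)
The argument is the standard descent-lemma-plus-telescoping scheme for \texttt{STP}-type methods; the only new point relative to the decreasing-stepsize analysis of Theorem~\ref{thm:nonconvex1} is that the constant stepsize $\alpha_{i_k}=\epsilon/(nv_{i_k})$ produces an additive $O(\epsilon)$ error term that must be absorbed by the hypothesis relating $\{p_i\}$ and $\{v_i\}$.

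First I would fix an iteration $k$, condition on $x_k$, and suppose coordinate $i_k=i$ is sampled, so that $\alpha_k=\epsilon/(nv_i)$. Assumption~\ref{ass:M-smooth} applied with $t=\pm\alpha_k$ gives $f(x_k\pm\alpha_k e_i)\le f(x_k)\pm\alpha_k\nabla_i f(x_k)+\tfrac{L_i}{2}\alpha_k^2$, and since $x_{k+1}$ is the best of $\{x_k,x_k+\alpha_k e_i,x_k-\alpha_k e_i\}$, choosing the favorable sign yields $f(x_{k+1})\le f(x_k)-\alpha_k|\nabla_i f(x_k)|+\tfrac{L_i}{2}\alpha_k^2$. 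Averaging over $i_k$ (with $i_k=i$ having probability $p_i$) and inserting $\alpha_k=\epsilon/(nv_i)$ gives
\[
\mathbb{E}\left[f(x_{k+1})\mid x_k\right]\le f(x_k)-\frac{\epsilon}{n}\sum_{i=1}^n\frac{p_i}{v_i}\,|\nabla_i f(x_k)|+\frac{\epsilon^2}{2n^2}\sum_{i=1}^n\frac{p_i L_i}{v_i^2}.
\]
Using $\sum_{i=1}^n\frac{p_i}{v_i}|\nabla_i f(x_k)|\ge\left(\min_i\frac{p_i}{v_i}\right)\|\nabla f(x_k)\|_1$, then taking total expectation, telescoping over $k=0,\dots,K$, using $f(x_{K+1})\ge f(x_*)$, and bounding $\sum_{k=0}^K\mathbb{E}\|\nabla f(x_k)\|_1\ge(K+1)\min_{k\le K}\mathbb{E}\|\nabla f(x_k)\|_1$, I obtain
\[
\min_{k=0,\dots,K}\mathbb{E}\left[\|\nabla f(x_k)\|_1\right]\le\frac{n\left(f(x_0)-f(x_*)\right)}{\epsilon\left(\min_i p_i/v_i\right)(K+1)}+\frac{\epsilon}{2n}\cdot\frac{\sum_{i=1}^n p_iL_i/v_i^2}{\min_i p_i/v_i}.
\]

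To close the argument, note the second term equals $\epsilon\cdot\frac{\sum_i p_iL_i/v_i^2}{2n(\min_i p_i/v_i)}$, which is strictly below $\epsilon$ precisely under the stated condition $\sum_i p_iL_i/v_i^2<2n(\min_i p_i/v_i)$, so the remaining budget for the first term is $\epsilon\big(1-\frac{\sum_i p_iL_i/v_i^2}{2n(\min_i p_i/v_i)}\big)>0$. Forcing the first term below this budget and solving for $K$ gives the threshold in \eqref{eq:complk} (up to the harmless factor $2$ and the replacement of $K+1$ by $K$, which only make the stated bound safely sufficient). I do not expect a genuine obstacle: the one step that must really be checked is the positivity of the coefficient $1-\frac{\sum_i p_iL_i/v_i^2}{2n(\min_i p_i/v_i)}$, which is exactly what the hypothesis guarantees and what makes the final rearrangement legitimate.
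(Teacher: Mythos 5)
Your proof is correct and follows essentially the same route as the paper: the identical one-step descent inequality (the paper's Lemma~\ref{lemma:key_lem}), the same estimates $\sum_i \frac{p_i}{v_i}|\nabla_i f(x_k)| \ge (\min_i \frac{p_i}{v_i})\|\nabla f(x_k)\|_1$ and $\frac{1}{2}\mathbb{E}[\alpha_{i_k}^2 L_{i_k}] = \frac{\epsilon^2}{2n^2}\sum_i \frac{p_i L_i}{v_i^2}$, and the same absorption of the additive $O(\epsilon)$ term via the hypothesis $\sum_i \frac{p_i L_i}{v_i^2} < 2n\min_i \frac{p_i}{v_i}$. The only (immaterial) difference is the final counting step: you telescope over all of $k=0,\dots,K$ and average, whereas the paper extracts a single good index $j$ from the window $\{\lfloor K/2\rfloor,\dots,2\lfloor K/2\rfloor\}$ with $\theta_j-\theta_{j+1}\le C/(\lfloor K/2\rfloor+1)$ --- which is where its explicit factor of $2$ originates --- and both arguments yield the stated threshold \eqref{eq:complk}.
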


Under the choice of importance sampling $p_i = \frac{L_i}{\sum_{i=1}^n {L_i}}$ and $v_i = {L_i}$, the complexity \eqref{eq:complk} becomes
\begin{equation}\label{eq:isjss8sus-optimal} 
 \frac{4 \left(f(x_0)-f(x_*)\right) n\left(\sum_{i=1}^n {L_i} \right)}{ \epsilon^2}.
\end{equation} 
Similar to Theorem \ref{thm:nonconvex1}, the uniform sampling complexity of \cite{Bergou_2018} can be recovered with $p_i = \frac{1}{n}$ and $v_i = L$. Note that the uniform sampling complexity is proportional to $n^2 L$ and since $n \sum_{i=1}^n {L_i} \le n^2 L$, the worst case complexity of the number of iterations is also improved for this choice of importance sampling.
\begin{figure*}[t]
\centering
\begin{tabular}{@{}c@{}c@{}c@{}}
\includegraphics[width=0.33\textwidth]{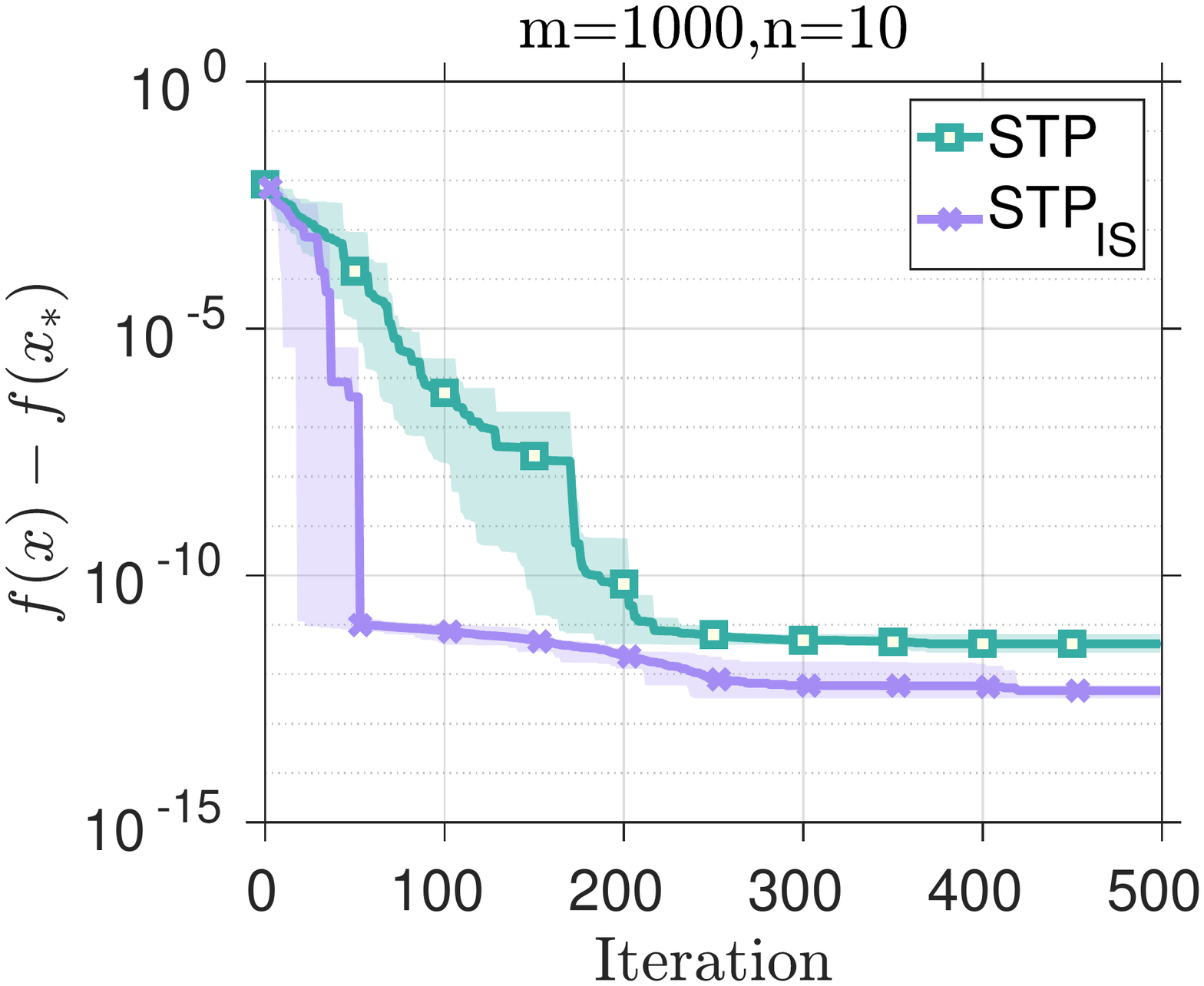}&
\includegraphics[width=0.33\textwidth]{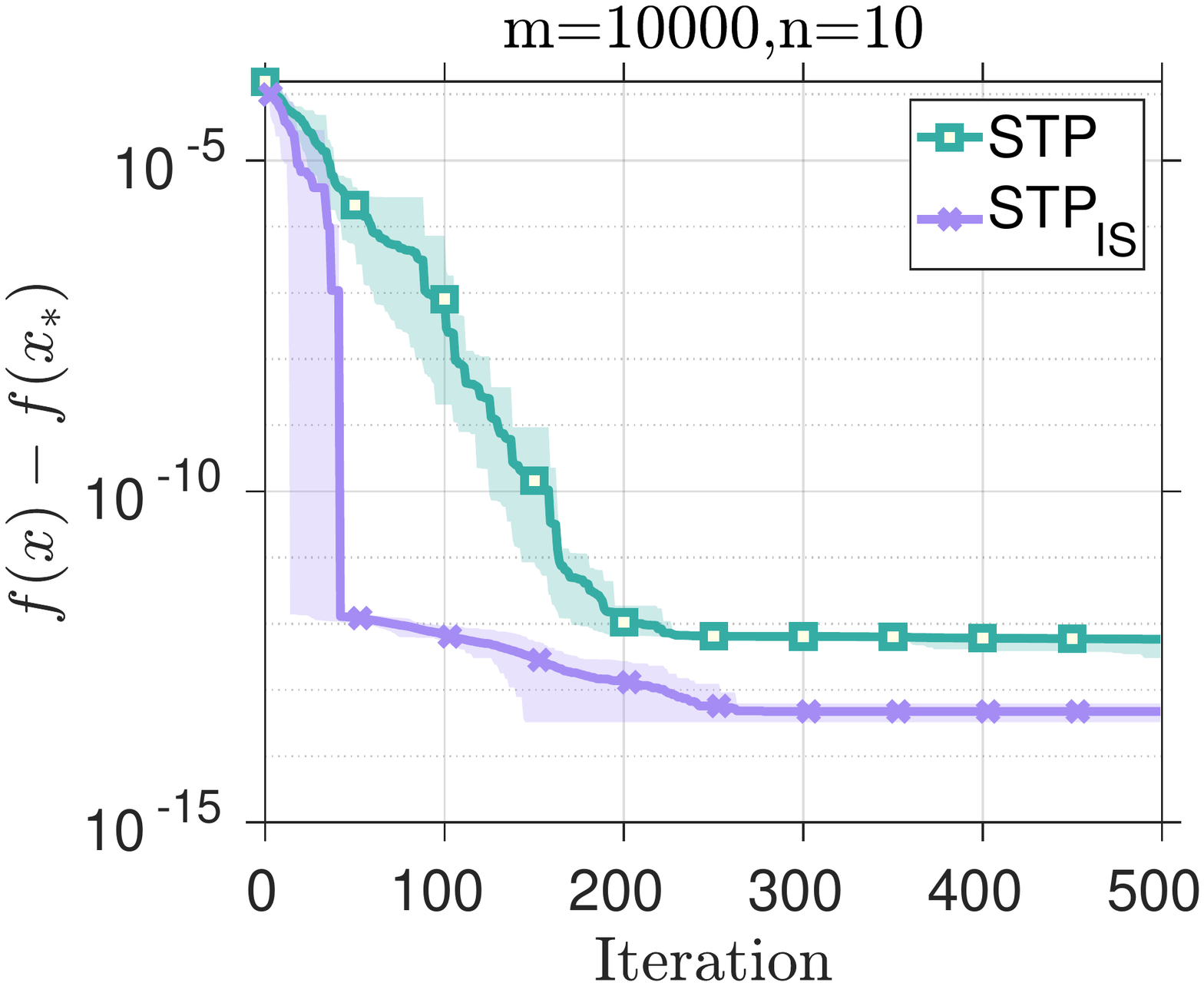}&
\includegraphics[width=0.33\textwidth]{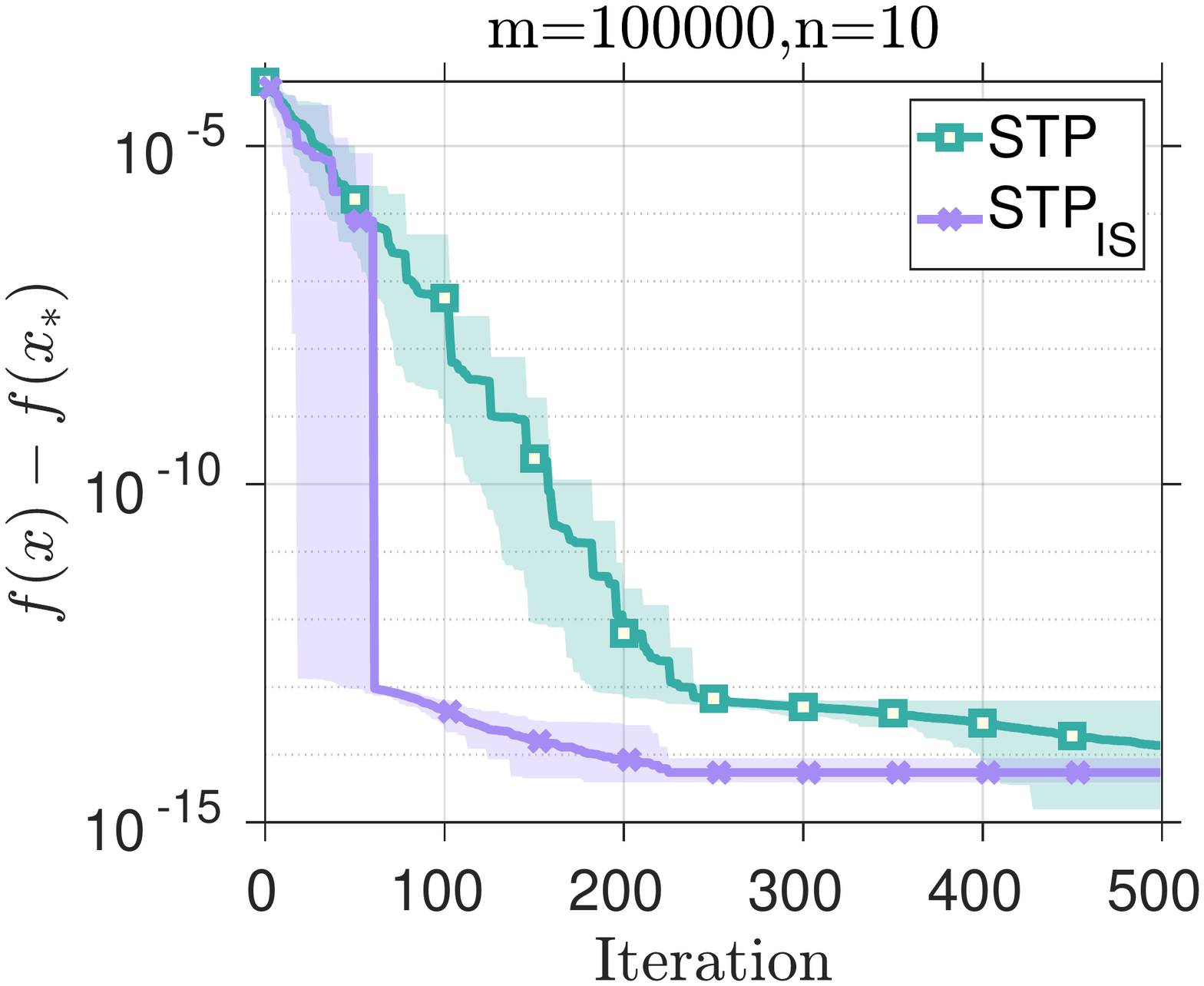}
\\ \vspace{-35mm} \\
\includegraphics[width=0.33\textwidth]{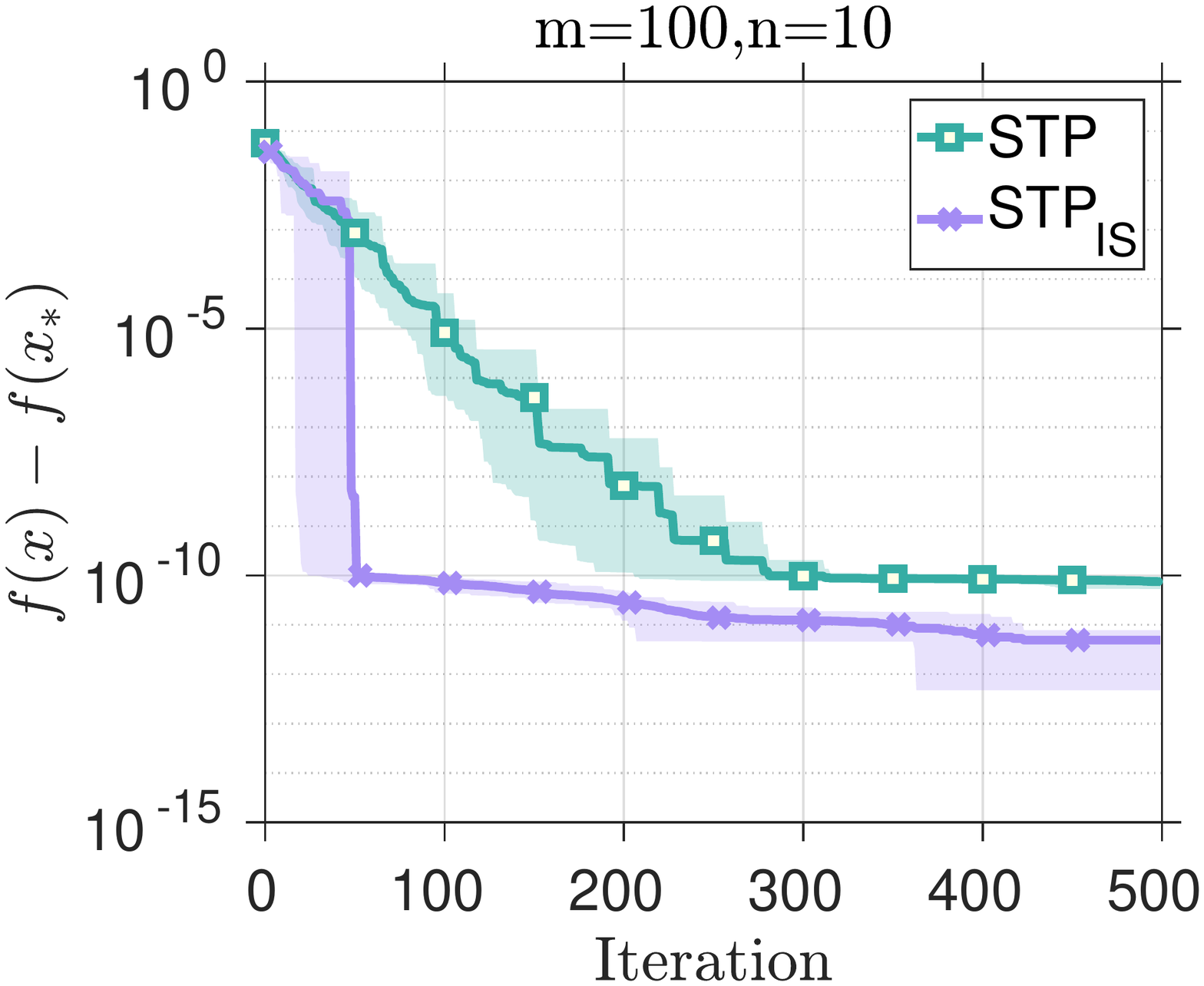} &
\includegraphics[width=0.33\textwidth]{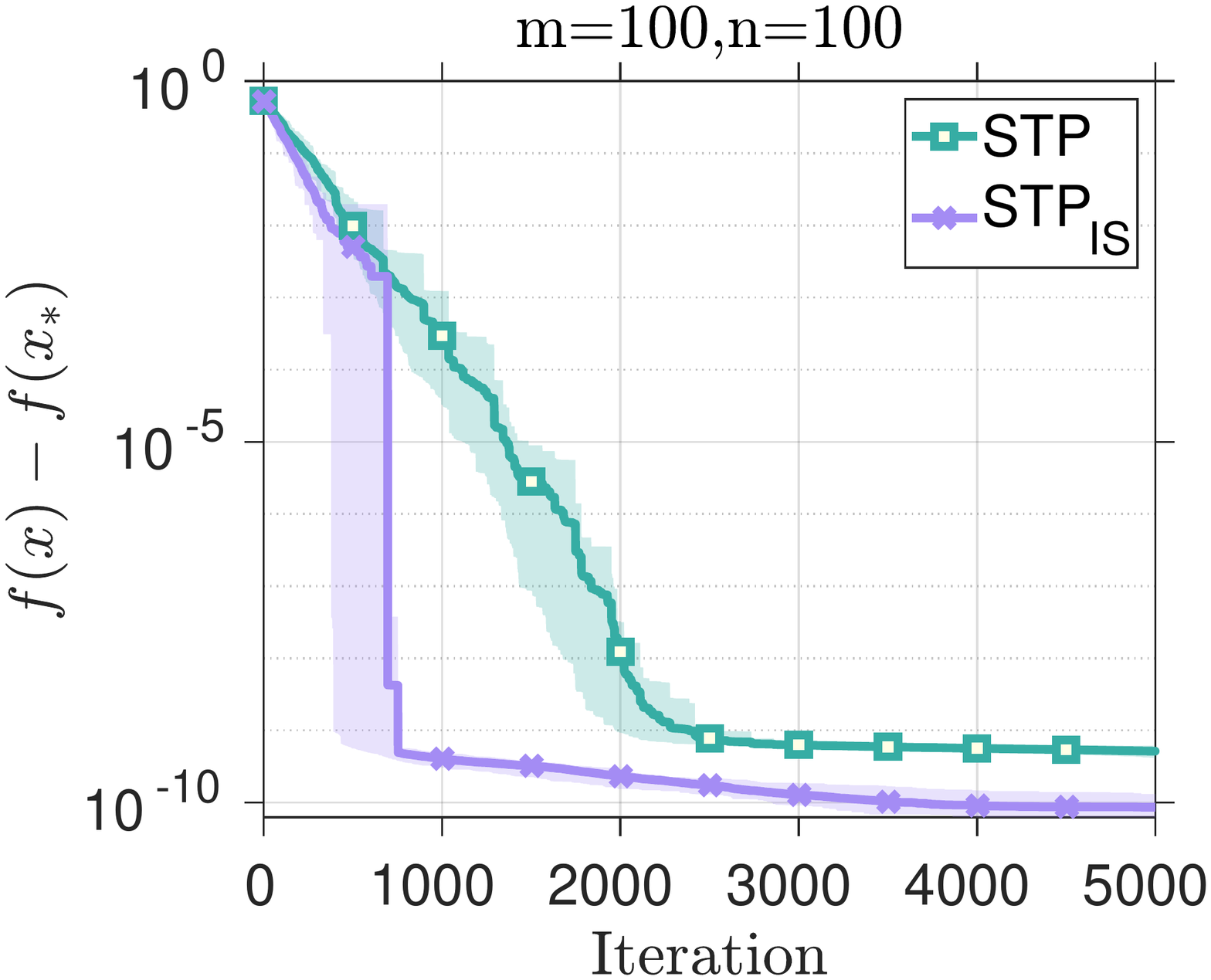} &
\includegraphics[width=0.33\textwidth]{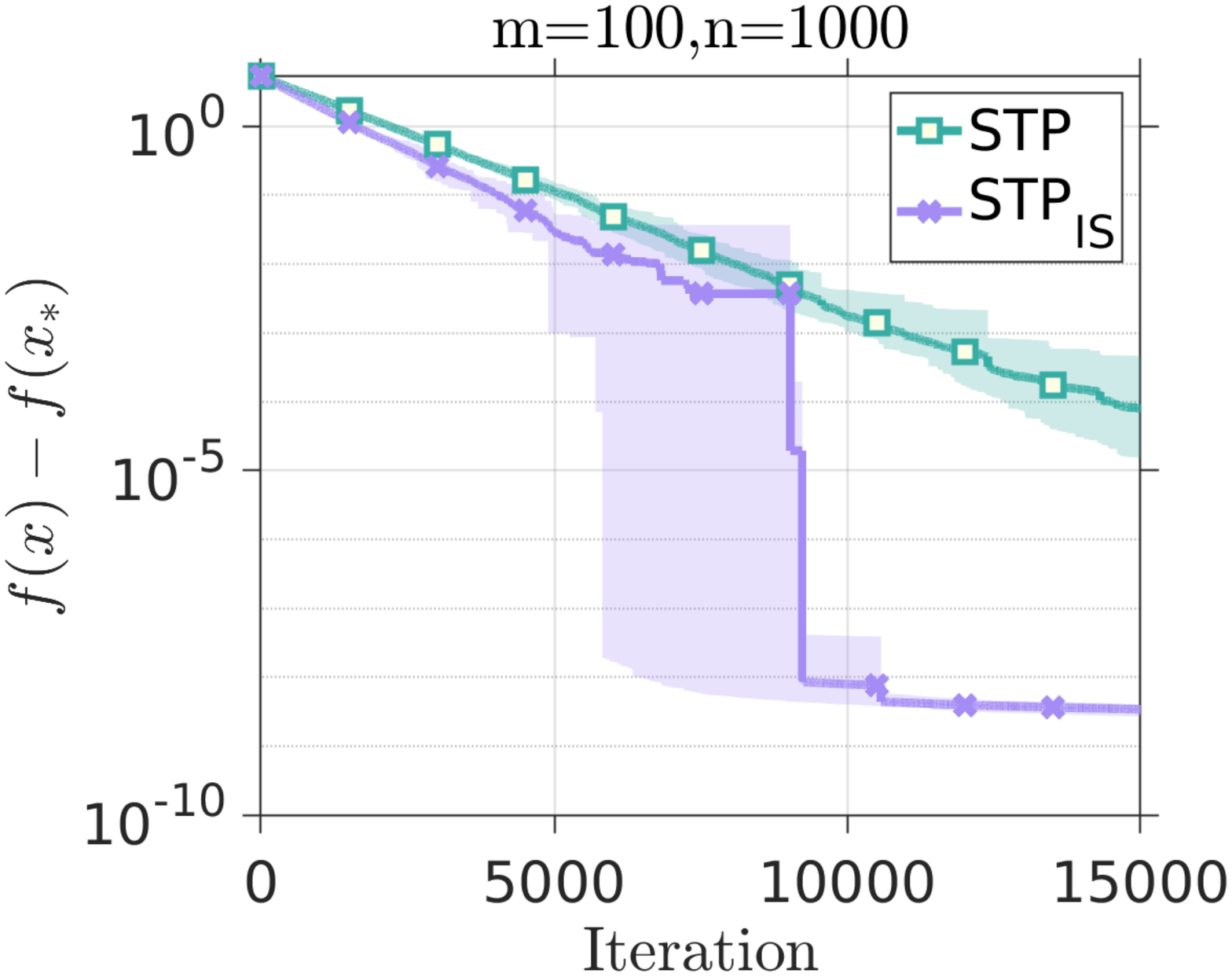}
\end{tabular}
\vspace{-1.5cm}
\caption{Shows the superiority of \texttt{STP}$_{\text{\texttt{IS}}}$ over \texttt{STP} on synthetically generated data $A$ and $y$ on the ridge regression problem. The first row shows the comparison with a varying number of rows in $A$, i.e. $m$, while the second row shows the comparison with a varying dimension $n$.}
\label{fig::ridge_regression_synthetic}
\end{figure*}

\section{Convex Case} \label{sec:conv}
This section describes our complexity results for Algorithm~\ref{alg:STP_IS} when the objective function $f$ is convex. We show that this method guarantees complexity bounds with the same order in $\epsilon$ as classical bounds in the literature, i.e., $1/\epsilon$ with an improved dependence on the Lipschitz constant. We will need the following additional assumption in the sequel.

\begin{assumption}\label{ass:level_sets} 
    The function $f$ is convex and has a bounded level set at $x_0$. That is, f satisfies:
    \begin{align*}
        R_0  \eqdef \max_x \{\|x - x_*\|_{\infty} : f(x) \leq f(x_0)\} < \infty
    \end{align*}
    Note that if $f$ is convex and has bounded level sets, the following holds:
    \begin{equation}
    \label{eq:R_0_condition}
    \begin{aligned}
        f(x) - f(x_*) & \leq \langle \nabla f(x), x - x_* \rangle \\
        &\leq \| \nabla f(x) \|_1 \|x - x_*\|_\infty \leq R_0 \| \nabla f(x) \|_1.
    \end{aligned}
    \end{equation}
\end{assumption}

\begin{theorem}\label{thm:convex2}
Let Assumptions~\ref{ass:M-smooth} and \ref{ass:level_sets} be satisfied. Choose $\alpha_{i_k} = \frac{|f(x_k + te_{i_k}) - f(x_k)|}{tv_{i_k}}$ and sufficiently small $t$ (see the \textit{appendix} for the bound on $t$). 
If  
\begin{equation}
\begin{aligned}
    k \ge  \frac{8R_0^2 n}{\min_i \frac{p_i}{v_i}} \left(\frac{1}{\epsilon} - \frac{1}{r_0} \right), 
    \label{convex_optimal_step_Size}
\end{aligned}
\end{equation}
then $\mathbb{E}\left [f(x_k) - f(x_*)\right] \leq \epsilon$.
\end{theorem}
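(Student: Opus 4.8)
The plan is to run the standard potential–decrease argument for stochastic direct search on convex functions, adapted to the two features that make this theorem non‑generic: the finite‑difference step size of Algorithm~\ref{alg:STP_IS}, and the fact that Assumption~\ref{ass:level_sets} lives in the $\ell_1/\ell_\infty$ geometry. Write $r_k \eqdef f(x_k)-f(x_*)$. First I would record the monotonicity observation: since $x_{k+1}$ is the best of $\{x_k,x_+,x_-\}$, we have $f(x_{k+1})\le f(x_k)$ deterministically for every $k$, hence $f(x_k)\le f(x_0)$, so $x_k$ lies in the level set of Assumption~\ref{ass:level_sets}, $\|x_k-x_*\|_\infty\le R_0$, and the chain \eqref{eq:R_0_condition} applies at every iterate, giving $r_k\le R_0\|\nabla f(x_k)\|_1$.

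Next I would prove the one‑step descent lemma. Condition on $x_k$ and suppose coordinate $i$ is selected. By Assumption~\ref{ass:M-smooth} in direction $\pm e_i$, whichever of $x_+,x_-$ moves against the sign of $\nabla_i f(x_k)$ already satisfies $f(\cdot)\le f(x_k)-\alpha_{i_k}|\nabla_i f(x_k)|+\tfrac{L_i}{2}\alpha_{i_k}^2$, hence so does $x_{k+1}$. Assumption~\ref{ass:M-smooth} in direction $e_i$ also shows the finite difference $\tfrac{|f(x_k+te_i)-f(x_k)|}{t}$ differs from $|\nabla_i f(x_k)|$ by at most $\tfrac{L_i t}{2}$; therefore, for $t$ small enough (the explicit bound worked out in the appendix), the step $\alpha_{i_k}=\tfrac{|f(x_k+te_i)-f(x_k)|}{t v_i}$ is near the minimizer of the quadratic model above, and plugging it in yields a decrease equal to a definite fraction of the ideal one: with $v_i=L_i$,
\begin{equation*}
f(x_{k+1})\le f(x_k)-\frac{|\nabla_i f(x_k)|^2}{8 v_i}.
\end{equation*}
Taking expectation over $i_k\sim(p_i)$ gives $\mathbb{E}[f(x_{k+1})\mid x_k]\le f(x_k)-\tfrac18\sum_{i=1}^n \tfrac{p_i}{v_i}|\nabla_i f(x_k)|^2$.

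Then I would pass to the $\ell_1$ norm and close the recursion: bound $\sum_i\tfrac{p_i}{v_i}|\nabla_i f(x_k)|^2\ge\left(\min_i\tfrac{p_i}{v_i}\right)\|\nabla f(x_k)\|_2^2\ge\tfrac1n\left(\min_i\tfrac{p_i}{v_i}\right)\|\nabla f(x_k)\|_1^2$, the last step by Cauchy–Schwarz, and combine with Step~1 to get $\mathbb{E}[r_{k+1}\mid x_k]\le r_k-\tfrac{\min_i p_i/v_i}{8nR_0^2}r_k^2$. Taking full expectation and using Jensen ($\mathbb{E}[r_k^2]\ge(\mathbb{E}[r_k])^2$) yields $a_{k+1}\le a_k-c\,a_k^2$ with $a_k\eqdef\mathbb{E}[r_k]$ and $c\eqdef\tfrac{\min_i p_i/v_i}{8nR_0^2}$; since $(a_k)$ is non‑increasing, dividing $a_k-a_{k+1}\ge c\,a_k^2$ by $a_k a_{k+1}>0$ gives $\tfrac{1}{a_{k+1}}\ge\tfrac{1}{a_k}+c$, so by telescoping $\tfrac{1}{a_K}\ge\tfrac{1}{r_0}+cK$, i.e. $a_K\le\left(\tfrac1{r_0}+cK\right)^{-1}$, which is $\le\epsilon$ as soon as $K\ge\tfrac1c\left(\tfrac1\epsilon-\tfrac1{r_0}\right)$ — exactly \eqref{convex_optimal_step_Size}.

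The main obstacle is Step~2: quantifying how small $t$ must be chosen so that the finite‑difference step size still forces a constant‑fraction decrease of the quadratic model — in particular controlling iterates at which some partial derivative $\nabla_i f(x_k)$ is small — which is precisely the ``sufficiently small $t$'' hypothesis in the statement and the source of the leading constant $8$. Everything downstream (the $\ell_1$ conversion, Jensen, and the $1/\epsilon$‑rate unrolling) is routine.
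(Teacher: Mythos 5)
Your overall skeleton (monotone iterates stay in the level set so \eqref{eq:R_0_condition} applies at every $x_k$; a one-step expected-decrease bound; the $\ell_2$-to-$\ell_1$ conversion losing a factor $n$; Jensen; the $1/a_k$ telescoping) coincides with the paper's, and those parts are sound. The gap is exactly where you flag ``the main obstacle'': the claimed pointwise bound $f(x_{k+1})\le f(x_k)-\frac{|\nabla_i f(x_k)|^2}{8v_i}$ for a fixed small $t$ is false. Write $\delta=|\nabla_i f(x_k)|$ and take $v_i=L_i$. Assumption~\ref{ass:M-smooth} gives $\frac{|f(x_k+te_i)-f(x_k)|}{t}\in\left[\delta-\tfrac{L_it}{2},\ \delta+\tfrac{L_it}{2}\right]$, so when $0<\delta\ll L_it$ the step $\alpha_{i_k}$ can be of order $t/2$ while the directional derivative is only $\delta$; the quadratic model then guarantees at best $f(x_{k+1})\le f(x_k)+\min\{0,\ \tfrac{L_it^2}{8}-\tfrac{t\delta}{2}\}$, i.e.\ a guaranteed decrease of $0$ when $\delta<L_it/4$, which is not $\le-\delta^2/(8L_i)$ for any $\delta\neq0$. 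No fixed $t$, however small, restores a multiplicative ``definite fraction of the ideal decrease'' uniformly over iterates, because iterates with small-but-nonzero $\nabla_i f(x_k)$ cannot be excluded. Since the entire point of this theorem (versus the one with stepsize $\alpha_0(f(x_k)-f(x_*))/v_{i_k}$) is to control the finite-difference error, this is the substantive content of the proof, not a deferrable technicality.

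The paper resolves it additively rather than multiplicatively: the lemma preceding the theorem keeps the finite-difference error as explicit terms, yielding (after $v_i=L_i$) $f(x_{k+1})\le f(x_k)-\frac{1}{2v_{i_k}}|\nabla_{i_k}f(x_k)|^2+O(t)\,|\nabla_{i_k}f(x_k)|+O(t^2)$; the linear-in-$t$ term is then bounded in expectation by $t\max_i\sqrt{p_i}\,\sqrt{2nLr_0}$ using monotonicity and smoothness, giving the recursion $r_{k+1}\le r_k-c_1r_k^2+c_2+c_3$ with $c_2=O(t)$, $c_3=O(t^2)$. The additive errors are absorbed inside the $1/r_k$ telescoping by choosing $t$ as an explicit function of $\epsilon$ (the four conditions in \eqref{cvx_range_t}) and using $r_k\ge\epsilon$ before the stopping time; that absorption --- numerator degraded by a factor $2$, denominator inflated by a factor $2$ --- is precisely where the constant $8$ in \eqref{convex_optimal_step_Size} comes from. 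To complete your argument you would need to replace your multiplicative Step~2 by this additive-error version and rerun Step~3 with the perturbed recursion, which is essentially the paper's proof.
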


Here, the complexity bound in \eqref{convex_optimal_step_Size} depends on the choice of the probabilities $\left\{p_i\right\}_{i=1}^{n}$ and the quantities $\left\{v_i\right\}_{i=1}^{n}$. Note that if $v_i = L_i$, it is easy to show that the minimum of the complexity bound \eqref{convex_optimal_step_Size} in $p_i$, i.e. minimizing $1/\min_i \frac{p_i}{v_i}$ in $p_i$, over a probability simplex is attained at $p_i = \frac{L_i}{\sum_{i=1}^n L_i}$. Thus, the complexity bound of this this importance sampling is
\begin{align*}
    k \ge 8 R_0^2 n \left(\sum_{i=1}^n L_i\right) \left(\frac{1}{\epsilon} - \frac{1}{r_0}\right).
\end{align*}
Since uniform sampling is proportional to $n^2L$ and that $n \sum_{i=1}^n L_i \le n^2 L$,  importance sampling is clearly better than  uniform sampling.

\section{Strongly Convex Case} \label{sec:sconv}
This section describes our complexity results for Algorithm~\ref{alg:STP_IS} when objective function $f$ is $\lambda$-strongly convex. We show that this method guarantees complexity bounds with the same order in $\epsilon$ as classical bounds in the literature, i.e., $\log(1/\epsilon)$ with an improved dependence on the Lipschitz constant. First, we define $\lambda$-strongly convexity functions.
\begin{assumption}\label{ass:f-SC} 
The function $f$ is  $\lambda$-strongly convex. That is, for some $\lambda>0$, the following holds
\begin{align*}
    f(x) \ge f(y) + \langle \nabla f(y), x-y \rangle + \frac{\lambda}{2} \|x-y\|_2^2.
\end{align*}
\end{assumption}

\begin{theorem} \label{thm:stronglyconvex2}
Let Assumptions \ref{ass:M-smooth} and \ref{ass:f-SC} be satisfied. Choose $\alpha_{i_k} = \frac{|f(x_k + te_{i_k}) - f(x_k)|}{t v_{i_k}}$ 
 and a sufficiently small $t$ (see the \textit{appendix} for the bound on $t$). If
\begin{equation}
\begin{aligned}
k \ge \frac{\max_i \frac{v_i}{p_i}}{\lambda} \log \left(\frac{2\left(f(x_0) - f(x_*)\right)}{\epsilon} \right),
\label{strogn_cvx_rate_optimal}
\end{aligned}
\end{equation}
then $\mathbb{E}\left[f(x_k) - f(x_*)\right] \leq \epsilon$.
\end{theorem}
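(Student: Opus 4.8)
The plan is to prove a one-step contraction of the optimality gap $r_k \eqdef f(x_k)-f(x_*)$ and then unroll it. Let $\mathbb{E}_{i_k}[\,\cdot\,]$ denote expectation over the random coordinate $i_k$ conditioned on $x_k$. I aim to establish the recursion
\begin{equation*}
\mathbb{E}_{i_k}\!\left[ r_{k+1} \right] \;\le\; \left(1 - \lambda\,\min_i \tfrac{p_i}{v_i}\right) r_k \;+\; \eta(t),
\end{equation*}
where $\eta(t)\to 0$ as $t\to 0$, at a rate controlled by the bound on $t$ from the appendix. Taking total expectations, this yields $\mathbb{E}[r_K] \le (1-q)^K r_0 + \eta(t)/q$ with $q \eqdef \lambda\min_i p_i/v_i$; since $(1-q)^K\le e^{-qK}$, it suffices to choose $t$ small enough that $\eta(t)/q \le \epsilon/2$ and then $K$ large enough that $e^{-qK}r_0 \le \epsilon/2$, i.e. $K \ge \tfrac1q\log(2r_0/\epsilon) = \tfrac{\max_i v_i/p_i}{\lambda}\log\!\left(\tfrac{2(f(x_0)-f(x_*))}{\epsilon}\right)$. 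Splitting $\epsilon$ as $\epsilon/2+\epsilon/2$ is what produces the factor $2$ inside the logarithm.

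\textbf{One-step decrease.} Fix $k$, condition on $i_k=i$, and set $g_i \eqdef \nabla_i f(x_k)$. Since $x_{k+1}$ is the best of $\{x_k,\,x_k+\alpha_k e_i,\,x_k-\alpha_k e_i\}$, we have $f(x_{k+1})\le f(x_k)$ and also $f(x_{k+1})$ is at most the value at the point obtained by stepping in the descent direction $-\mathrm{sign}(g_i)\,e_i$; applying the coordinate-wise smoothness of Assumption~\ref{ass:M-smooth} along that direction gives
\begin{equation*}
f(x_{k+1}) \;\le\; f(x_k) - |g_i|\,\alpha_k + \tfrac{L_i}{2}\,\alpha_k^2 .
\end{equation*}
Next I would control $\alpha_k$: Assumption~\ref{ass:M-smooth} at $x_k$ with perturbations $\pm t e_i$ gives $\big|f(x_k+te_i)-f(x_k)\big| = |g_i|\,t + \mathcal{O}(L_i t^2)$, and since $(f(x_k))_k$ is non-increasing, the iterates remain in the level set $\{x:f(x)\le f(x_0)\}$, which is bounded by strong convexity, so the hidden constants are uniform in $k$. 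Hence $\alpha_k = |g_i|/v_i + \mathcal{O}(t)$, and substituting,
\begin{equation*}
f(x_{k+1}) \;\le\; f(x_k) - \frac{g_i^2}{v_i}\left(1-\frac{L_i}{2v_i}\right) + \mathcal{O}(t) \;\le\; f(x_k) - \frac{g_i^2}{2v_i} + \mathcal{O}(t),
\end{equation*}
the last step using that $v_i$ is taken at least as large as $L_i$ (in particular for the recommended choice $v_i=L_i$), as ensured by the conditions in the appendix. Averaging over $i_k$ and using $\sum_i \tfrac{p_i}{v_i}g_i^2 \ge (\min_i\tfrac{p_i}{v_i})\,\|\nabla f(x_k)\|_2^2$,
\begin{equation*}
\mathbb{E}_{i_k}[f(x_{k+1})] \;\le\; f(x_k) - \tfrac12\Big(\min_i\tfrac{p_i}{v_i}\Big)\|\nabla f(x_k)\|_2^2 + \mathcal{O}(t).
\end{equation*}

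\textbf{Contraction and finish.} $\lambda$-strong convexity implies the gradient domination (Polyak-Lojasiewicz) inequality $\|\nabla f(x_k)\|_2^2 \ge 2\lambda\,(f(x_k)-f(x_*))$, obtained by evaluating Assumption~\ref{ass:f-SC} at $y=x_k$, minimizing its right-hand side over the free variable, and using $f(x_*)=\min f$. Substituting this bound, subtracting $f(x_*)$, and renaming the $\mathcal{O}(t)$ term $\eta(t)$ gives the target recursion with $q=\lambda\min_i p_i/v_i$, and the counting argument of the first paragraph finishes. The main obstacle I anticipate is the finite-difference step: rigorously showing that $\alpha_k=|f(x_k+te_i)-f(x_k)|/(t v_i)$ tracks $|g_i|/v_i$ up to an error that is uniformly controlled along the whole trajectory and can be pushed below a prescribed fraction of $\epsilon$ by the explicit bound on $t$ — this is where Assumption~\ref{ass:M-smooth}, the monotonicity and boundedness of the iterates, and the choice of $v_i$ must be combined with care; the remaining ingredients (the ``best of three'' inequality, the Polyak-Lojasiewicz step, and the geometric unrolling) are routine.
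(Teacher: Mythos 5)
Your proposal is correct and matches the paper's proof in all essentials: the best-of-three descent lemma, the $\mathcal{O}(t)$ approximation of the finite-difference stepsize by $|\nabla_{i_k} f(x_k)|/v_{i_k}$ with errors controlled uniformly along the (monotone) trajectory, a one-step contraction with factor $1-\lambda\min_i \tfrac{p_i}{v_i} = 1 - \lambda/\max_i \tfrac{v_i}{p_i}$, and the geometric unrolling with the $\epsilon/2+\epsilon/2$ split that produces the factor $2$ inside the logarithm. The only cosmetic difference is that the paper lower-bounds $\sum_i \tfrac{p_i}{v_i}(\nabla_i f(x_k))^2$ directly via strong convexity in a weighted norm with modulus $\mu=\lambda/\max_i\tfrac{v_i}{p_i}$, whereas you factor out $\min_i\tfrac{p_i}{v_i}$ and apply the Euclidean Polyak--Lojasiewicz inequality; the resulting constants are identical.
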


The complexity bound in \eqref{strogn_cvx_rate_optimal} is minimized, in $p_i$ for $v_i = L_i$, with $p_i = \frac{L_i}{\sum_{i=1}^n L_i}$. Importance sampling improves over uniform sampling, since $\sum_{i=1}^n L_i \leq n L$.

\section{Experiments} \label{sec:exper}

We conduct extensive experiments on synthetic and real datasets comparing the uniform sampling \texttt{STP} against the importance sampling version \texttt{STP}$_{\text{\texttt{IS}}}$. The experiments are conducted on several choices of the function $f$. In particular, we perform experiments on regularized ridge regression on synthetic data and squared SVM loss on real data from the LIBSVM dataset \cite{chang2011libsvm}. Moreover, for non-convex problems, we compare \texttt{STP} and \texttt{STP}$_{\text{\texttt{IS}}}$ on various continuous control environments on MuJoCo~\cite{Todorov_2012}. We also compare against state-of-art solvers for the continuous control task.

\begin{figure*}[t]
\centering
\begin{tabular}{@{}c@{}c@{}c@{}}
\includegraphics[width=0.33\textwidth]{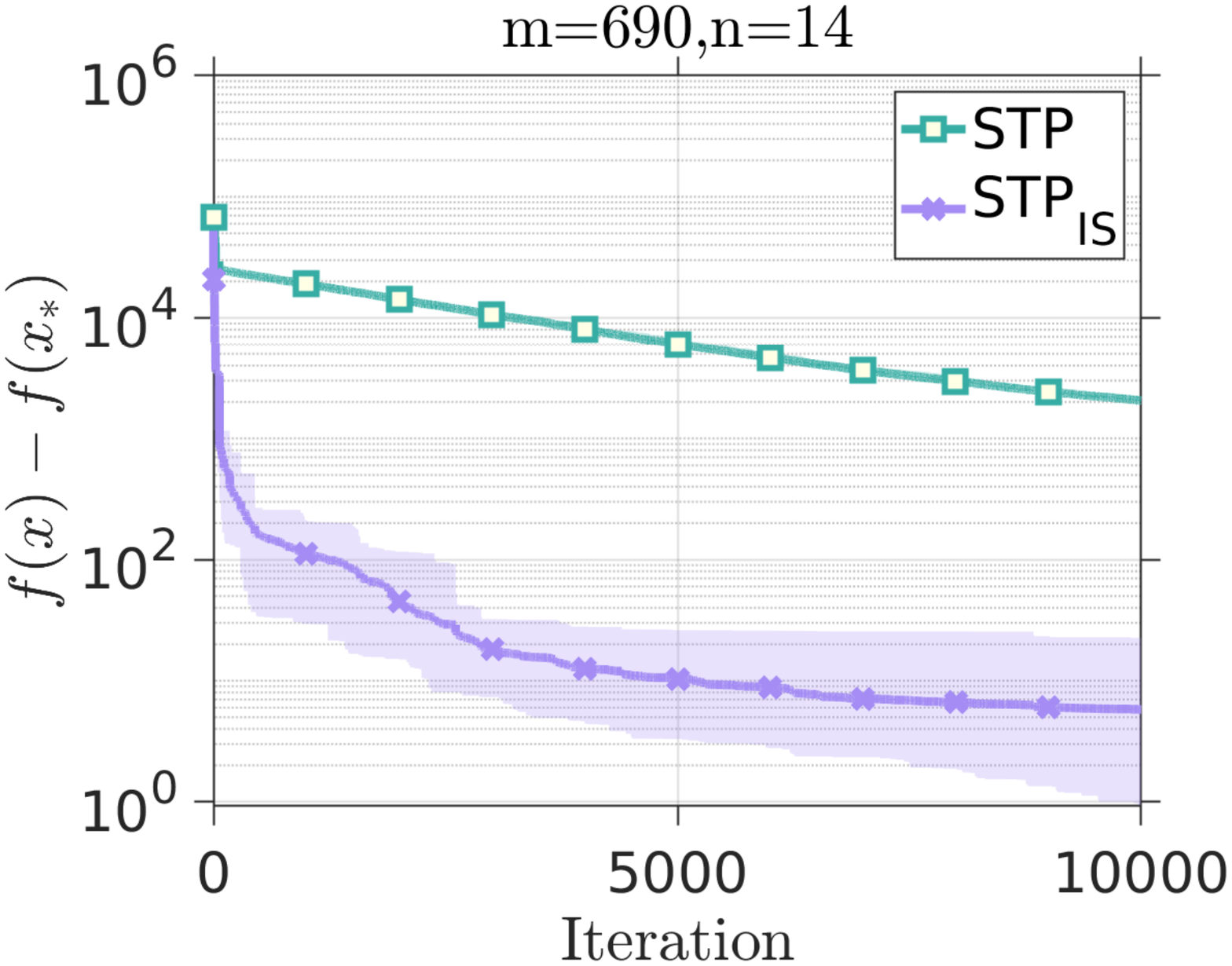}&
\includegraphics[width=0.33\textwidth]{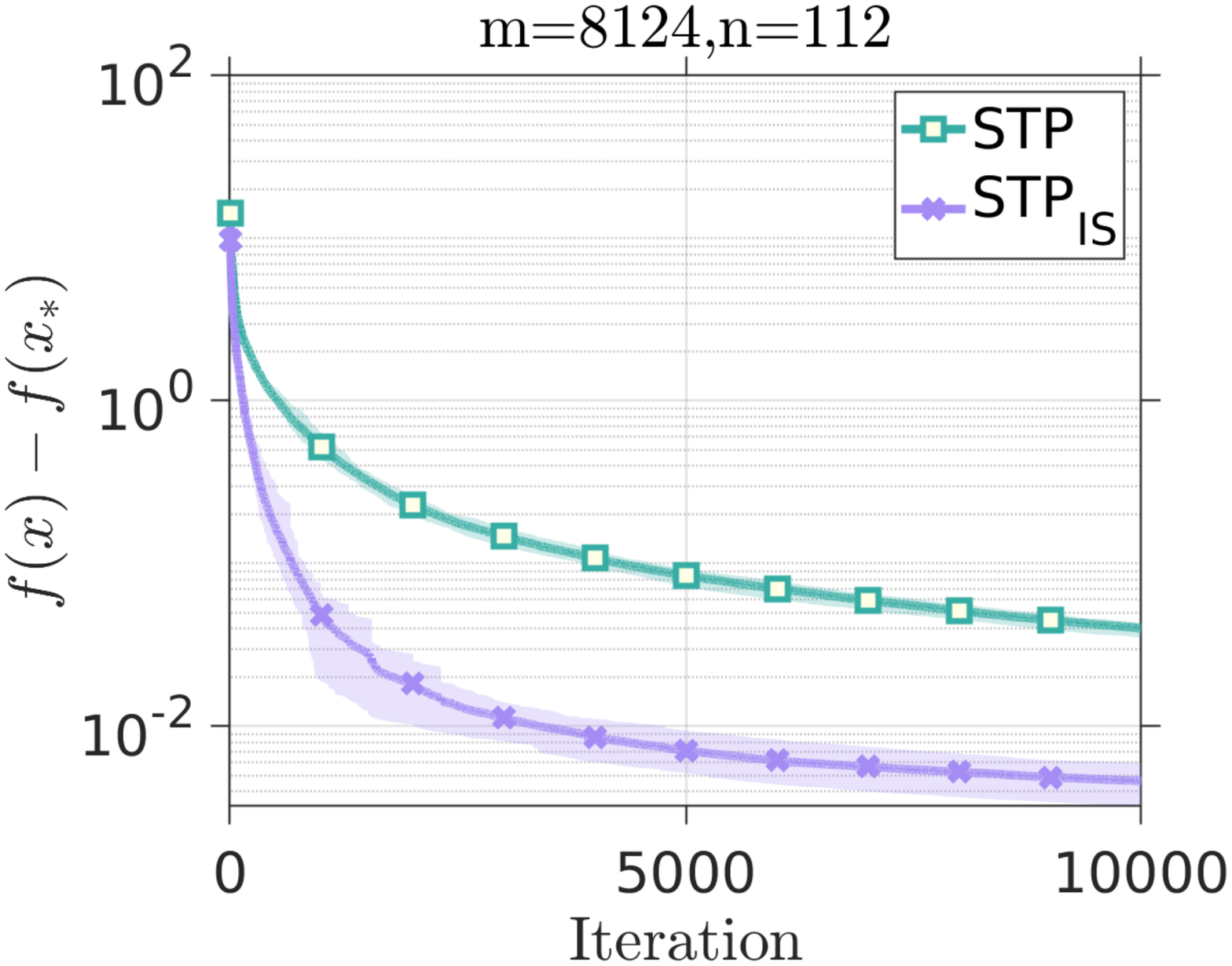}&
\includegraphics[width=0.33\textwidth]{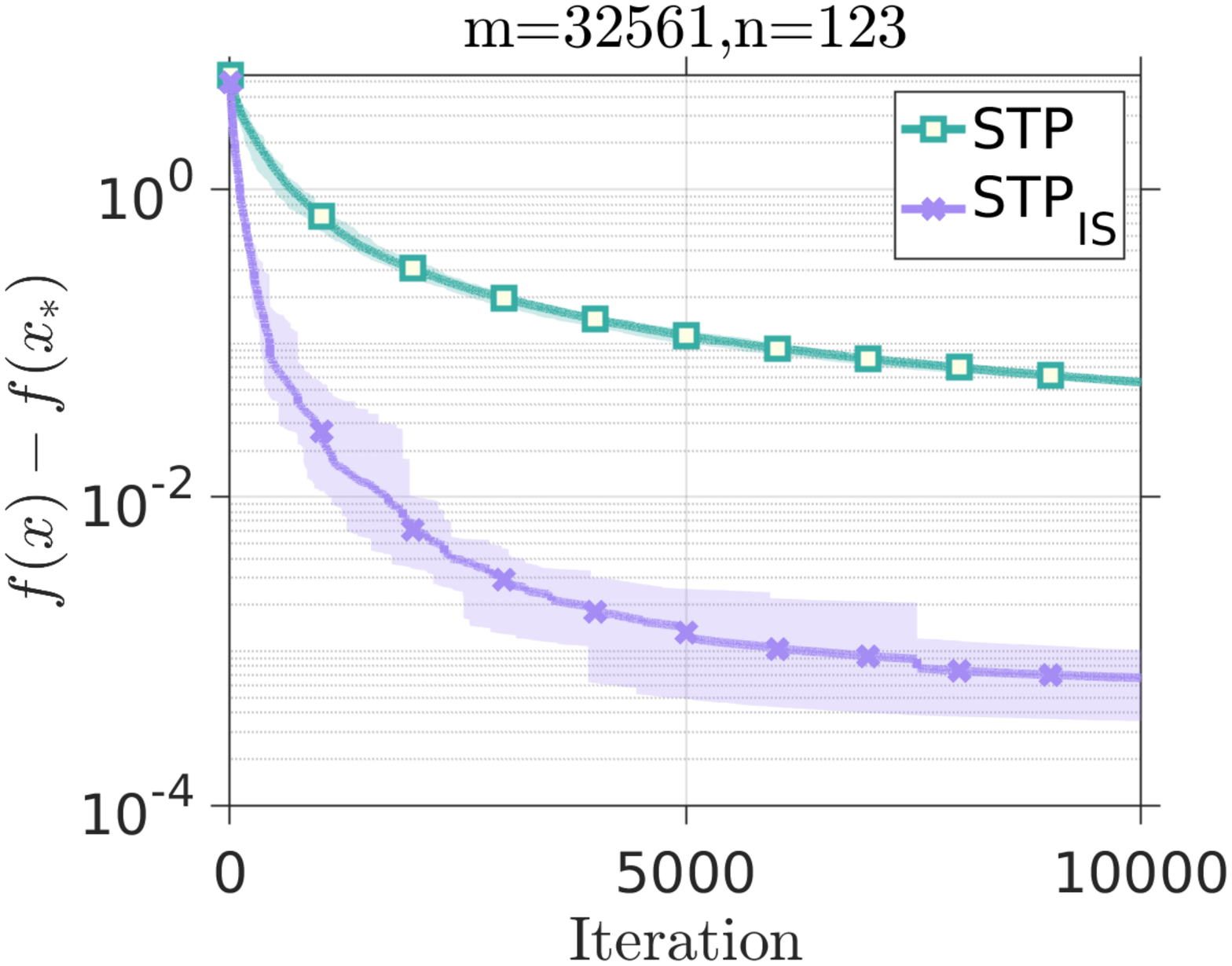}
\\ \vspace{-35mm} \\
\includegraphics[width=0.33\textwidth]{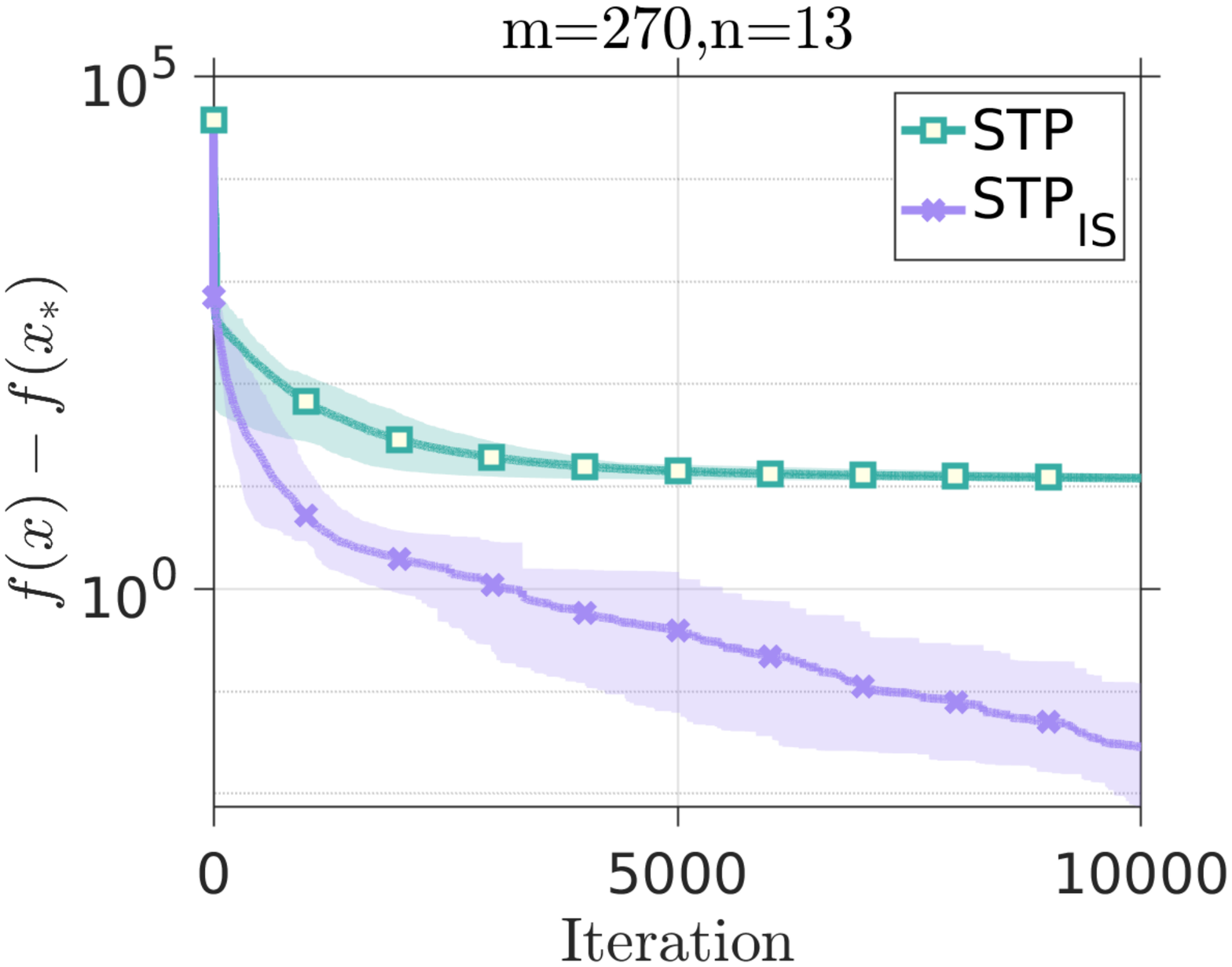} &
\includegraphics[width=0.33\textwidth]{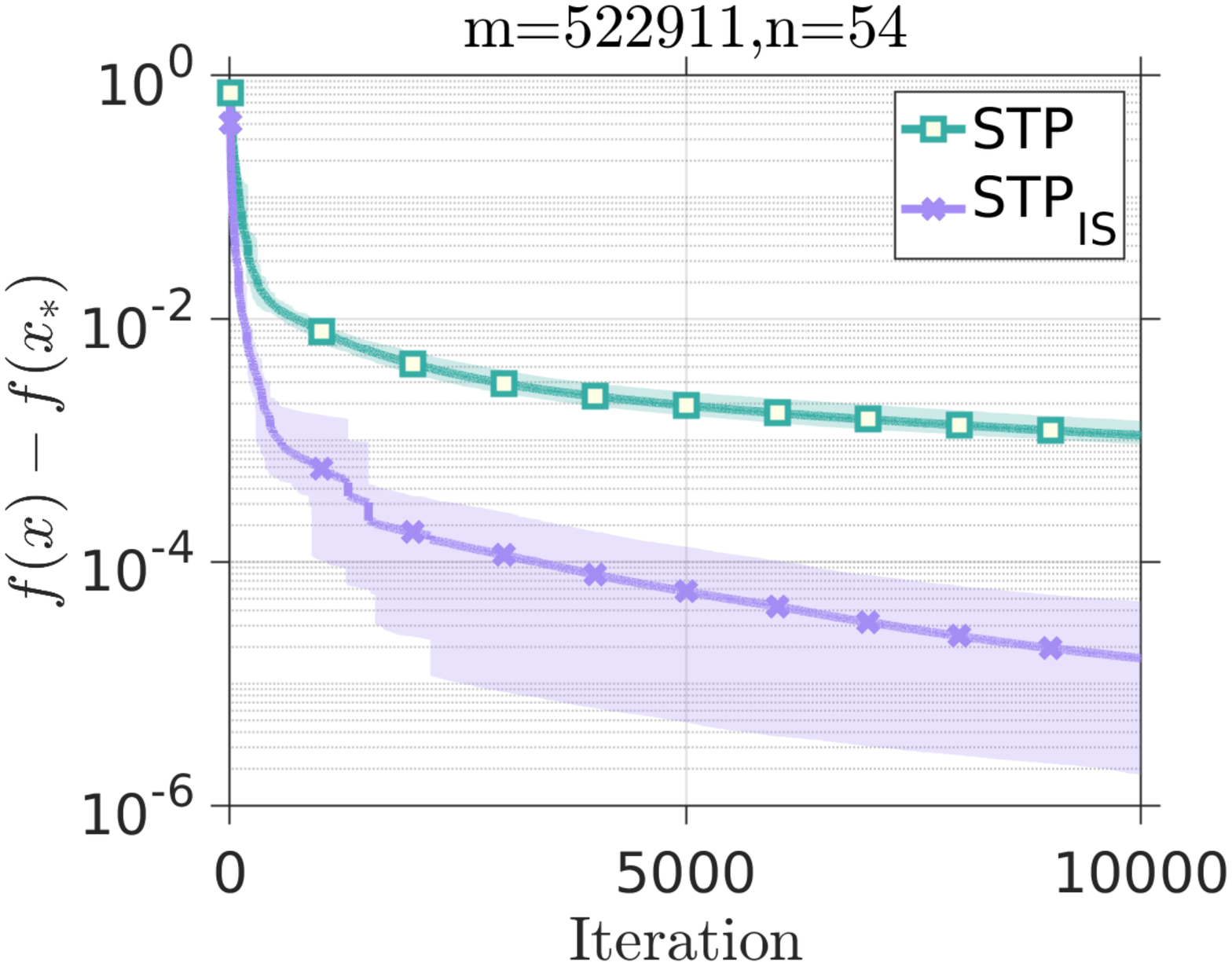} &
\includegraphics[width=0.33\textwidth]{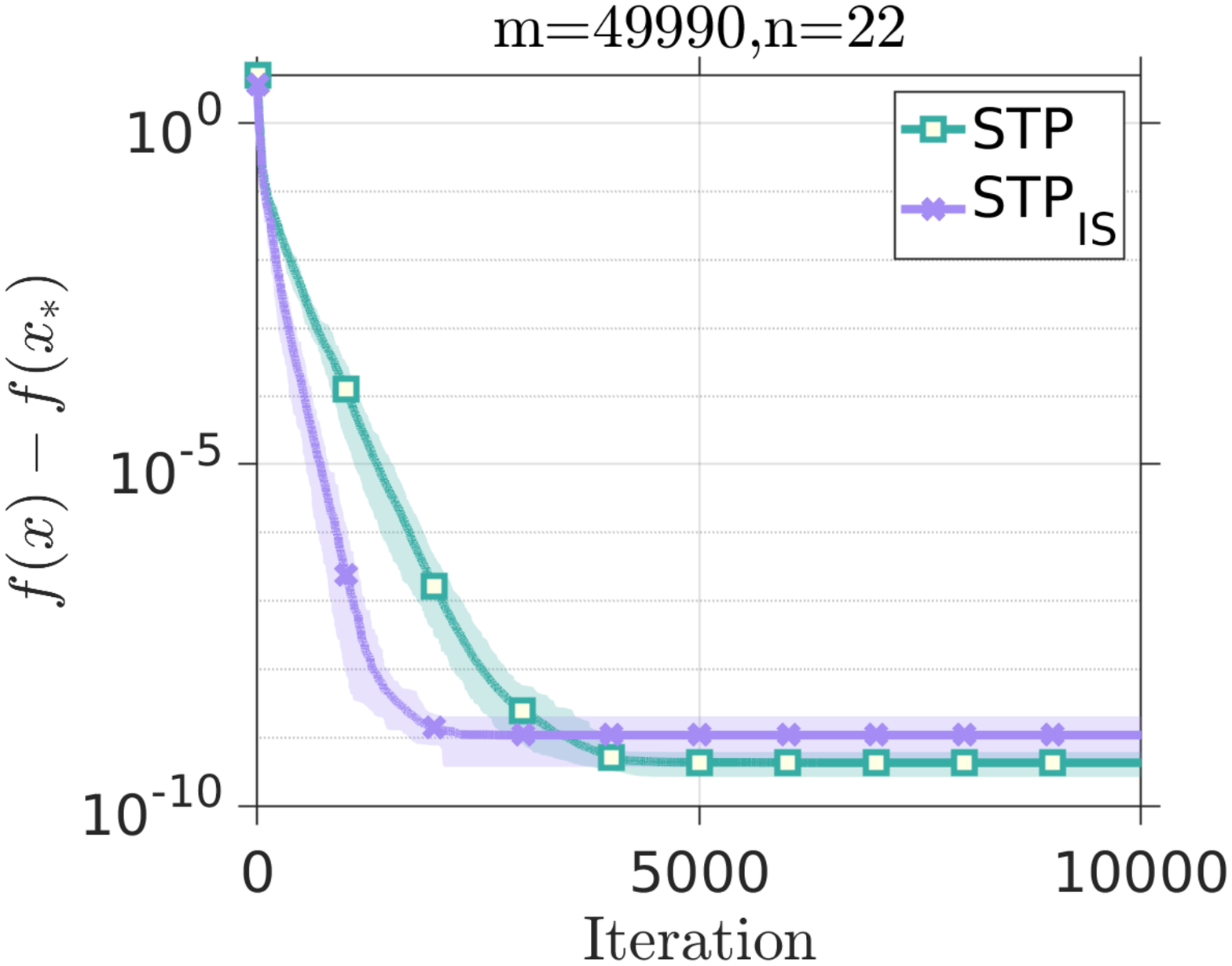}
\end{tabular}
\vspace{-1.5cm}
\caption{Shows the superiority of \texttt{STP}$_{\text{\texttt{IS}}}$ over \texttt{STP} on real LIBSVM dataset on the ridge regression problem. The datasets used in the experiments are \texttt{australian}, \texttt{mushrooms} and \texttt{a9a} in first row and \texttt{heart}, \texttt{cov1} and \texttt{ijcnn1} in second row.}
\label{fig::ridge_regression_real}
\end{figure*}

\subsection{Ridge regression on synthetic data}

We compare \texttt{STP} against \texttt{STP}$_{\text{\texttt{IS}}}$ on synthetic data on the regularized ridge regression problem:
$ f(x) = \frac{1}{2m} \|Ax - y\|_2^2 + \frac{\lambda}{2}\|x\|_2^2,$
where $A \in \mathbb{R}^{m \times n}$, $y \in \mathbb{R}^m$ are the data and $\lambda > 0$ is the regularization parameter. The elements of $A$ and $y$ were sampled from the standard Gaussian distribution $\mathcal{N}\left(0,1\right)$. Note that for ridge regression, $L_i = \frac{1}{m}\|A(:,i)\|_2^2 + \lambda$ where, following \cite{gower2018stochastic}, we normalize data such that $\|A(:,1)\|_2 = 1$ and $\|A(:,i)\|_2 = \frac{1}{m}$, $i=2,\ldots,m$ and set $\lambda = \frac{1}{m}$. We compute a high accuracy solution $x_*$ by solving the ridge regression problem exactly with a linear solver. Thereafter, the metric used is the difference between the current objective value and the optimal one, i.e. $f(x) - f(x_*)$. Since the objective is $\lambda$-strongly convex, we use the stepsize suggested by Theorem \ref{thm:stronglyconvex2}. In all experiments, we set $t = 10^{-4}$. We perform experiments across difference choices of $m$ and $n$. In the first row of Figure \ref{fig::ridge_regression_synthetic}, we compare both methods with a fixed $n=10$ and a varying $m$, i.e. $m \in \{10^{3},10^{4},10^{5}\}$. The superior performance of $\texttt{STP}_{\texttt{IS}}$ over \texttt{STP} is evident from Figure \ref{fig::ridge_regression_synthetic}. Moreover, we conduct further experiments where $m$ is fixed such that $m=100$ but with a varying dimension, i.e. $n \in \{10^{1},10^{2},10^{3}\}$. All experiments are conducted 10 times and we report the average, worst and best performances. A similar behaviour is also present as seen in the second row of Figure \ref{fig::ridge_regression_synthetic} where $\texttt{STP}_{\texttt{IS}}$ is far more superior to \texttt{STP}. In all experiments, the stopping criterion is set such that both \texttt{STP} and $\texttt{STP}_{\texttt{IS}}$ run for exactly $5 \times 10^{2}$ iterations for small problems, i.e. $n=10$, while for  problems of size $n=10^2$ and $n=10^3$, both methods are terminated at $5 \times 10^3$ and $15 \times 10^{3}$ iterations, respectively.

\subsection{Ridge regression and squared SVM on real data}

We also conduct experiments on the regularized ridge regression problem on real datasets where $A$ and $y$ are from LIBSVM data. We follow the same protocol as the experiments on synthetic data. We compare both algorithms on 6 different datasets, namely, \texttt{australian}, \texttt{mushrooms}, \texttt{a9a}, \texttt{heart}, \texttt{cov1} and \texttt{ijcnn1}. In addition to ridge regression, we conduct experiments on the same real datasets on the regularized squared SVM loss: $ f(x) = \frac{1}{2} \sum_{i=1}^m\max \left(0,1-y_i a_i^\top x\right)^2 + \frac{\lambda}{2} \|x\|_2^2,$ where $a_i$ is the $i^{\text{th}}$ row of $A$. Note that $L_i = \|A(:,i)\|_2^2 + \lambda$. Since the squared SVM problem does not exhibit a closed form solution, we compare both \texttt{STP} and $\texttt{STP}_{\texttt{IS}}$ in terms of the objective value $f(x)$. 
In Figure \ref{fig::ridge_regression_real}, we show the comparison between both \texttt{STP} and $\texttt{STP}_{\texttt{IS}}$ on the ridge regression on all 6 datasets. It is clear that using the proposed importance sampling is far more superior to standard uniform sampling. The improvement is also consistently present on the squared SVM problem as seen in Figure \ref{fig::squared_svm_loss_real}.

\begin{figure*}[t]
\vspace{-1.0cm}
\centering
\begin{tabular}{@{}c@{}c@{}c@{}}
\includegraphics[width=0.33\textwidth]{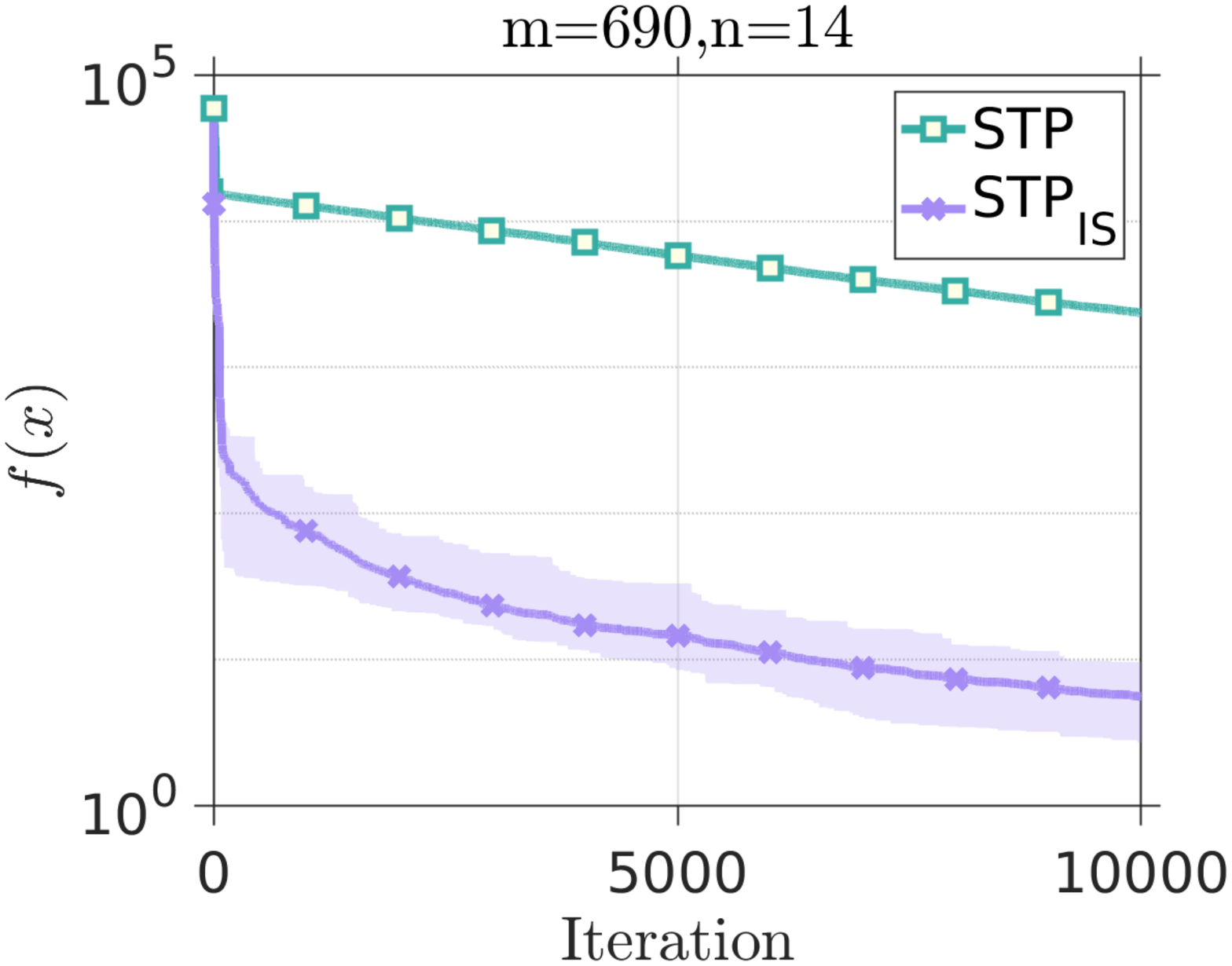}&
\includegraphics[width=0.33\textwidth]{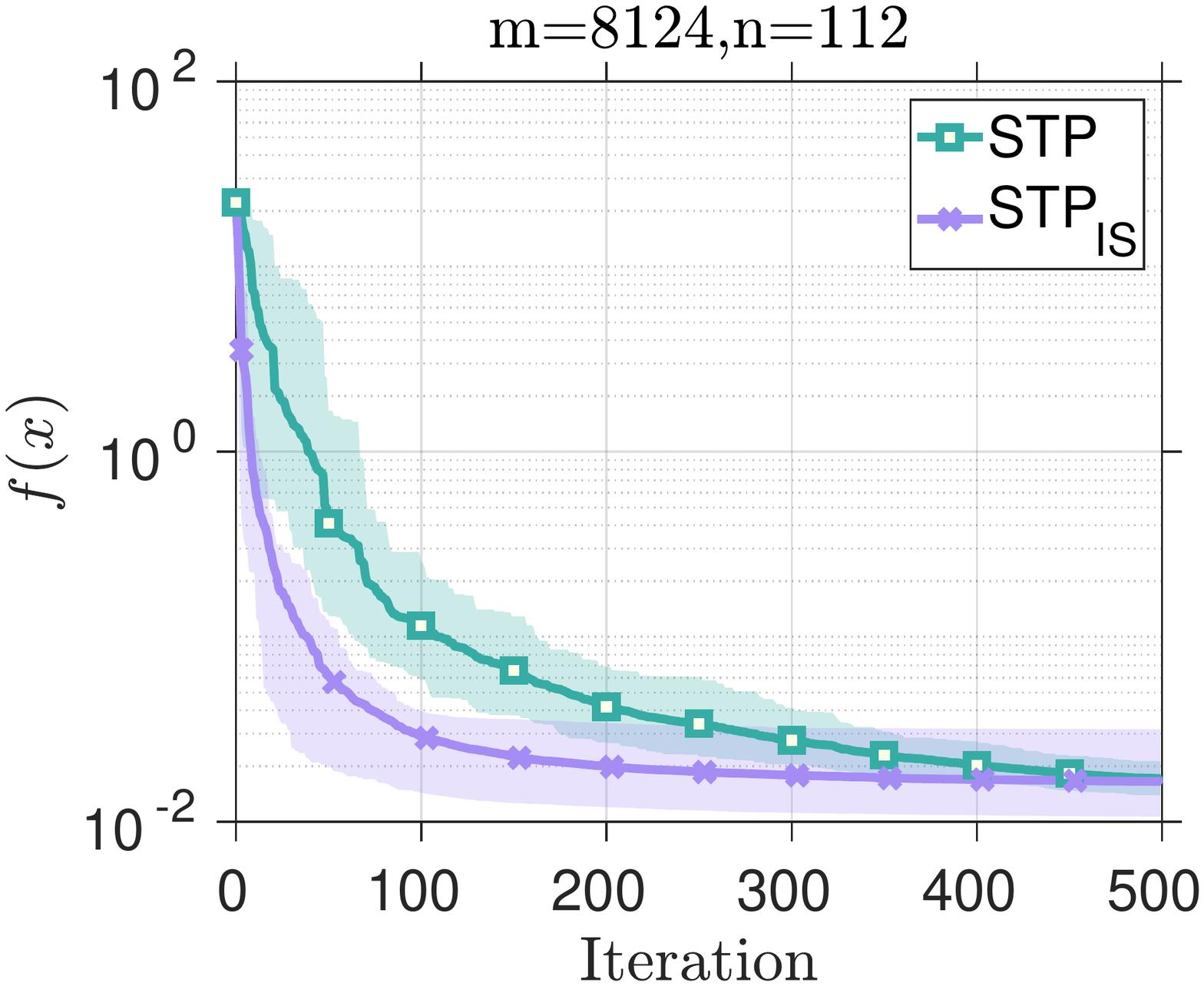}&
\includegraphics[width=0.33\textwidth]{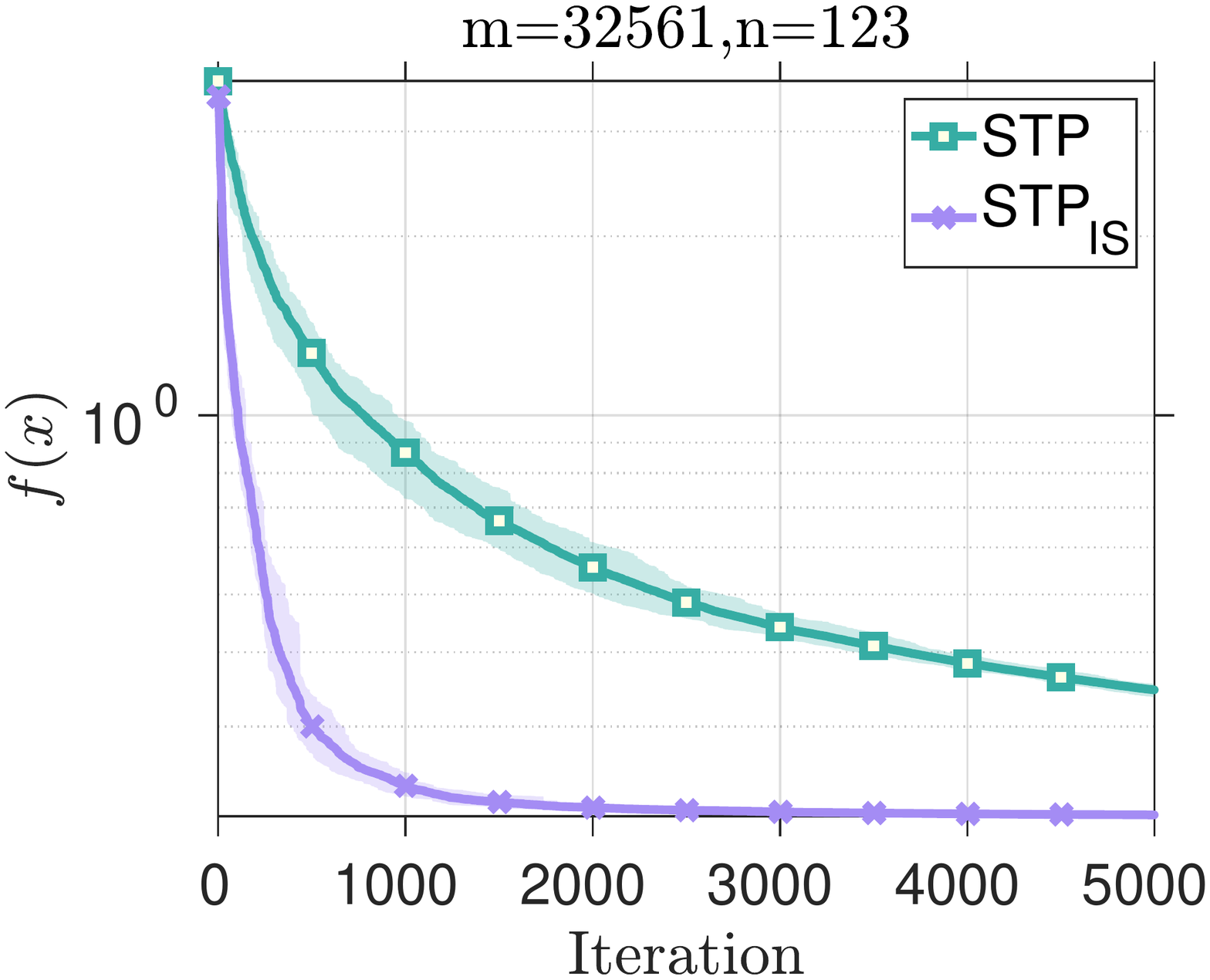} \\ \vspace{-35mm} \\
\includegraphics[width=0.33\textwidth]{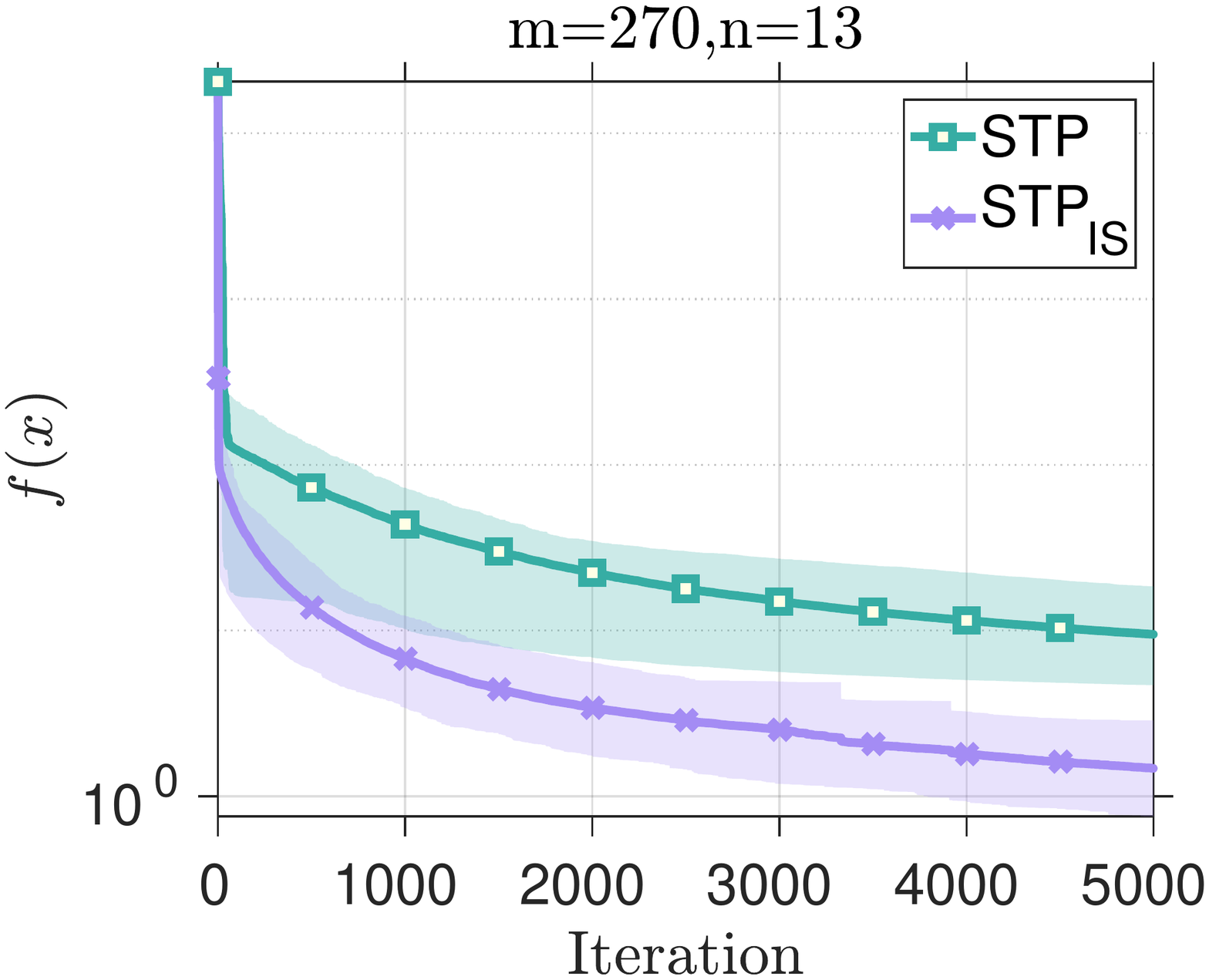} &
\includegraphics[width=0.33\textwidth]{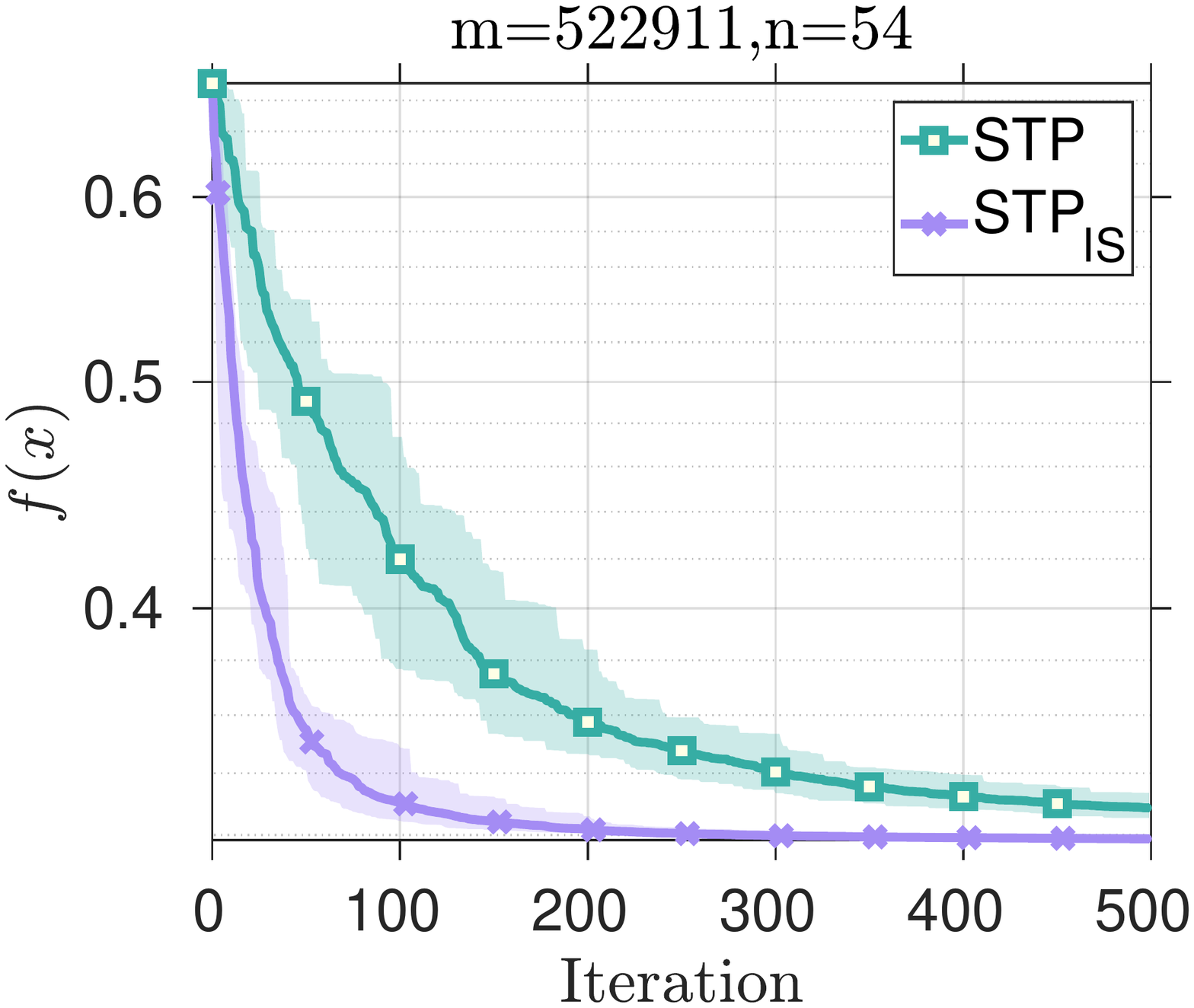} &
\includegraphics[width=0.33\textwidth]{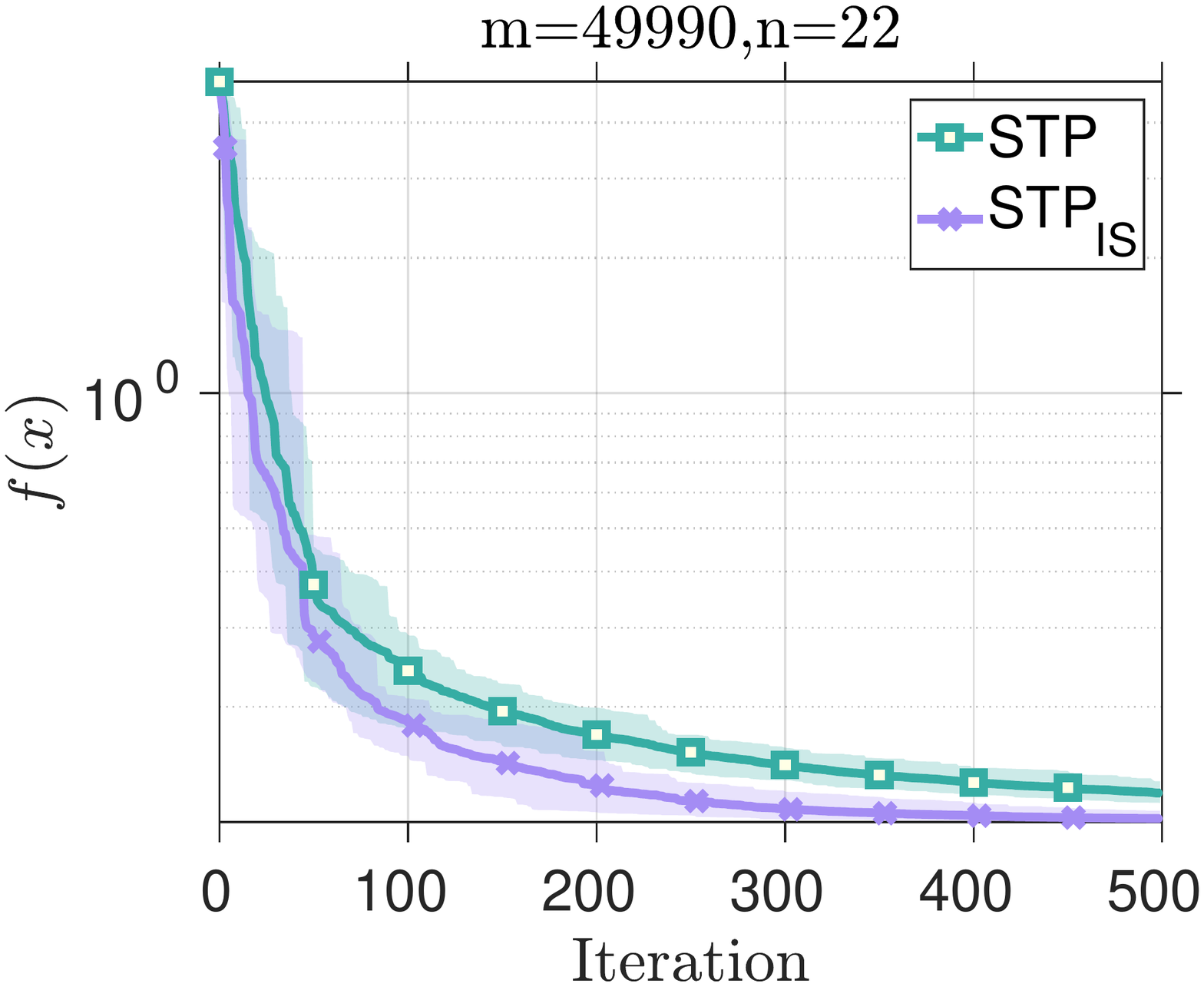}
\end{tabular}
\vspace{-1.5cm}
\caption{Shows the superiority of \texttt{STP}$_{\text{\texttt{IS}}}$ over \texttt{STP} on real LIBSVM dataset on the squared SVM loss. The datasets used in the experiments are \texttt{australian}, \texttt{mushrooms} and \texttt{a9a} in the first row and \texttt{heart}, \texttt{cov1} and \texttt{ijcnn1} for the second row.}
\label{fig::squared_svm_loss_real}
\end{figure*}

\subsection{Continuous control experiments}

Here, we address the problem of model-free control of a dynamical system. Model-free reinforcement learning algorithms (especially policy gradient methods), provide an off-the-shelf model-free approach to learn how to control a dynamical system. Such models have been typically benchmarked in a simulator. Thus, we adopt the MuJoCo~\cite{Todorov_2012} continuous control suite following its wide adaptation. We choose 5 problems with various difficulty \texttt{Swimmer-v1}, \texttt{Hopper-v1}, \texttt{HalfCheetah-v1}, \texttt{Ant-v1}, and \texttt{Humanoid-v1}. In all experiments, we use linear policies similar to \cite{Mania_2018,Rajeswaran_2017}. 

Considering the stochastic nature of the dynamical systems, i.e. $f$ is stochastic, we take multiple ($K$) measurements for $f(x_k), f(x_+)$ and $f(x_-)$ and use their mean as the function values. Considering the varying dimensionality of the state space, we use different $K$ for each problem, in particular, we set $K=2$ for \texttt{Swimmer-v1}, $K=4$ for \texttt{Hopper-v1} and \texttt{HalfCheetah-v1}, $K=40$ for \texttt{Ant-v1} and $K=120$ for \texttt{Humanoid-v1}. These values are decided using grid search over the set of $K \in \{1,2,4,8,16\}$ for low dimensional problems and $K \in \{20, 40, 120, 240\}$ for high dimensional \texttt{Ant-v1} and \texttt{Humanoid-v1} problems. Following our remark given by Equation  \ref{eq:hg89f8g9f}, we use a square matrix $B$ sampled from a standard Gaussian distribution $\mathcal{N}(0,1)$. In our experiments, this coordinate transform resulted in a better performance. Since the Lipschitz constants are not available for continuous control, we learn an estimate of the function using a parametric family (specifically multi-layer perceptron) and use its Lipschitz constants as the estimates to decide importance sampling weights. This is similar to actor-critic methods\cite{sutton2018reinforcement} used in the policy gradient literature. Similar to us, actor-critic methods learn an estimate of the value function and use it to decide which point to evaluate. We defer the details of estimation procedure to the \textit{appendix}. Following the common practice, we perform all experiments with 5 random initialization and measure the mean average reward at each iteration. We give detailed comparison of \texttt{STP} and our proposed importance sampling variant \texttt{STP}$_{\text{\texttt{IS}}}$ in terms of reward vs. sample complexity in Figure~\ref{fig::rl} for both adaptive and fixed step size cases (see Theorems \ref{thm:nonconvex1} and \ref{thm:nonconvex2}). Shaded regions in figures show standard deviations.

\begin{figure*}
\centering
\begin{tabular}{ccc}
\includegraphics[width=0.30\textwidth]{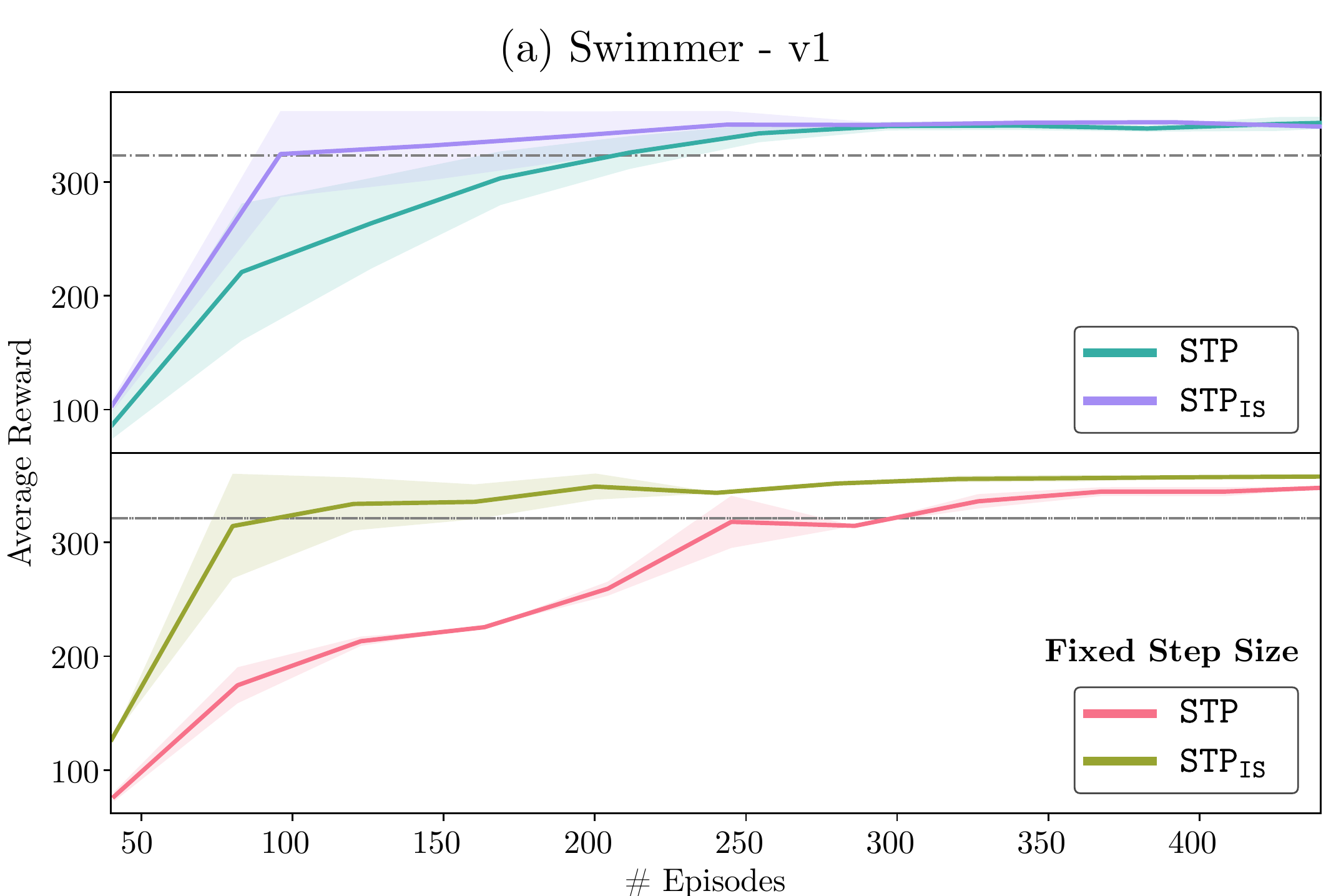} &
\includegraphics[width=0.30\textwidth]{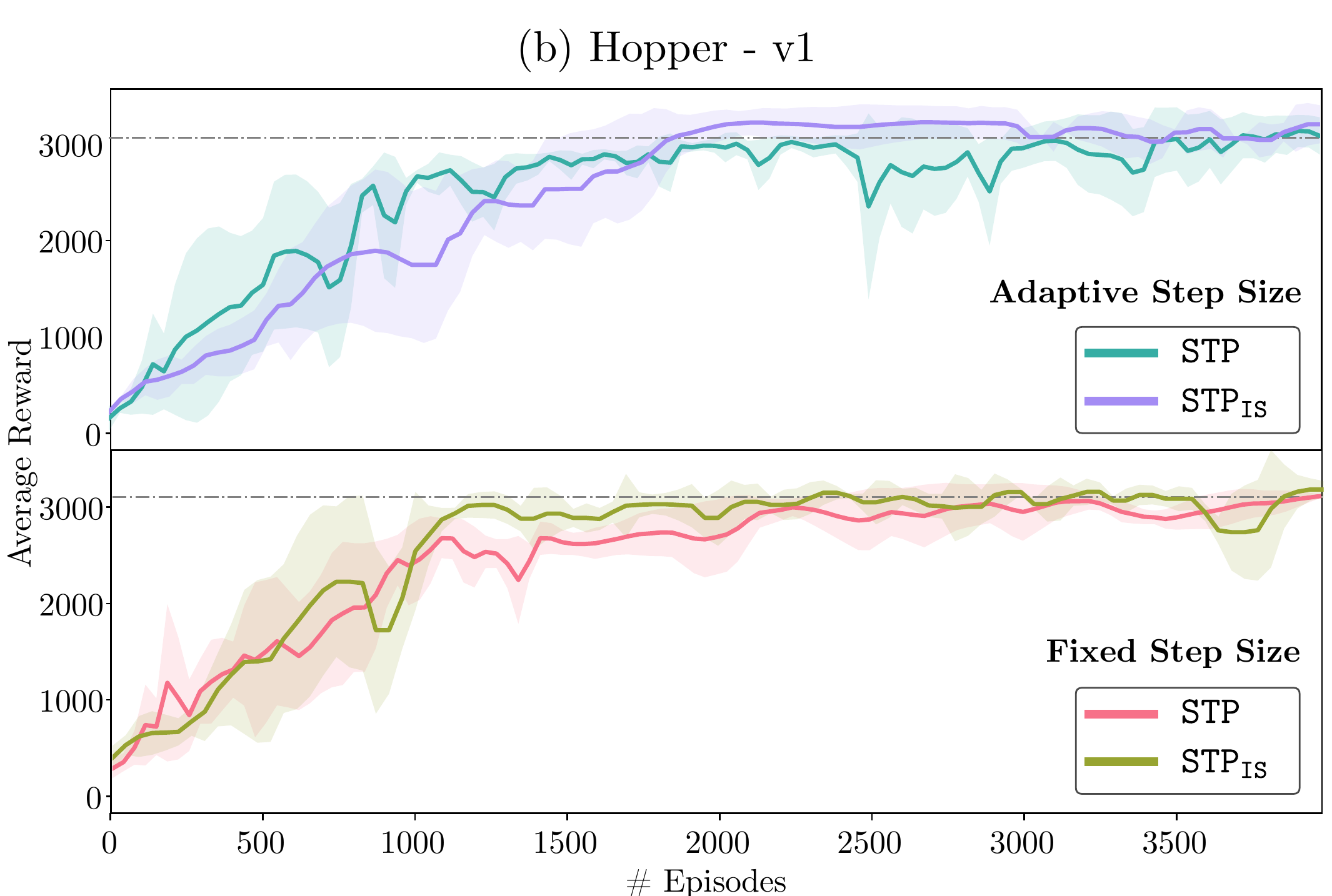} &
\includegraphics[width=0.30\textwidth]{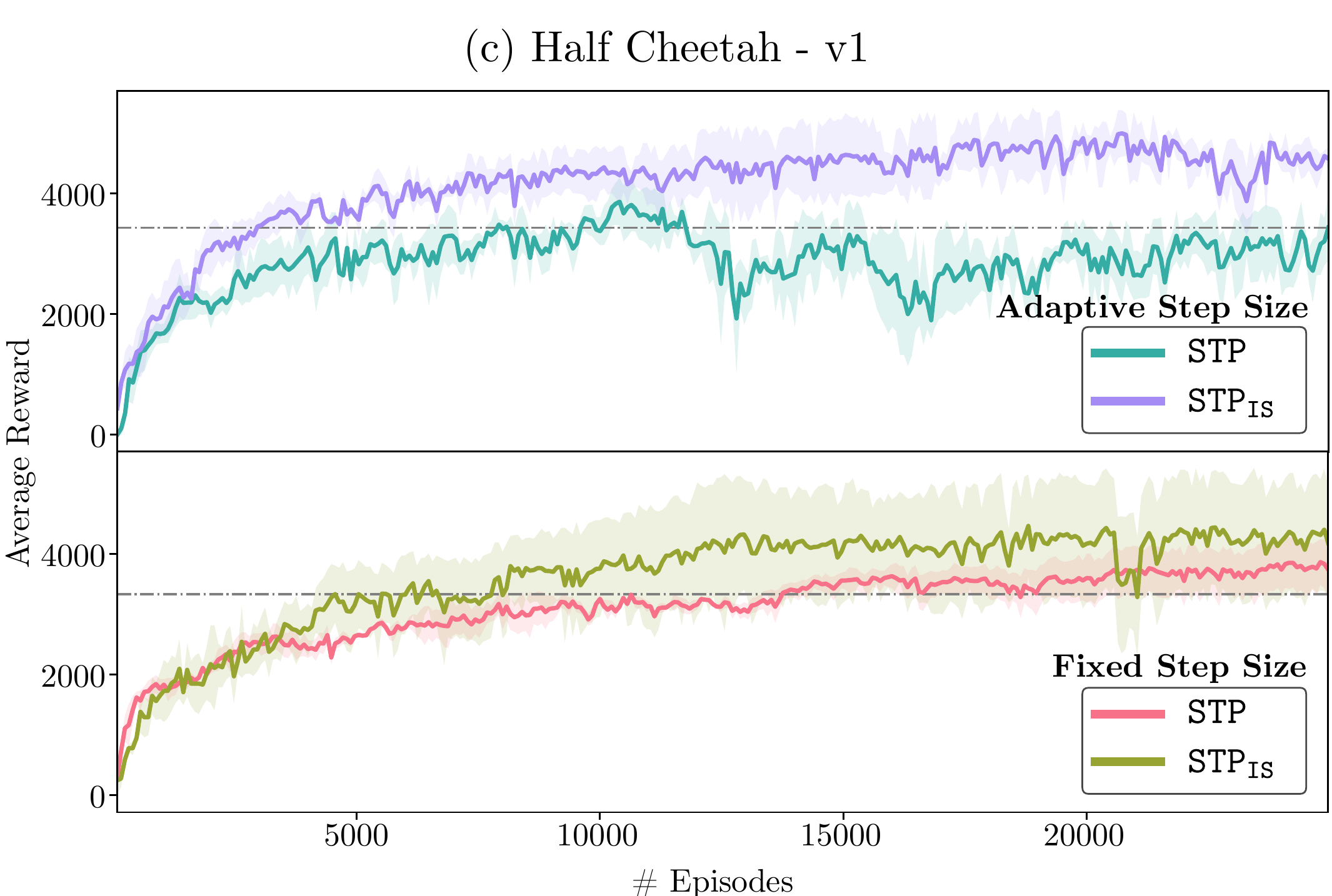}
\end{tabular}
\begin{tabular}{cc}
\includegraphics[width=0.30\textwidth]{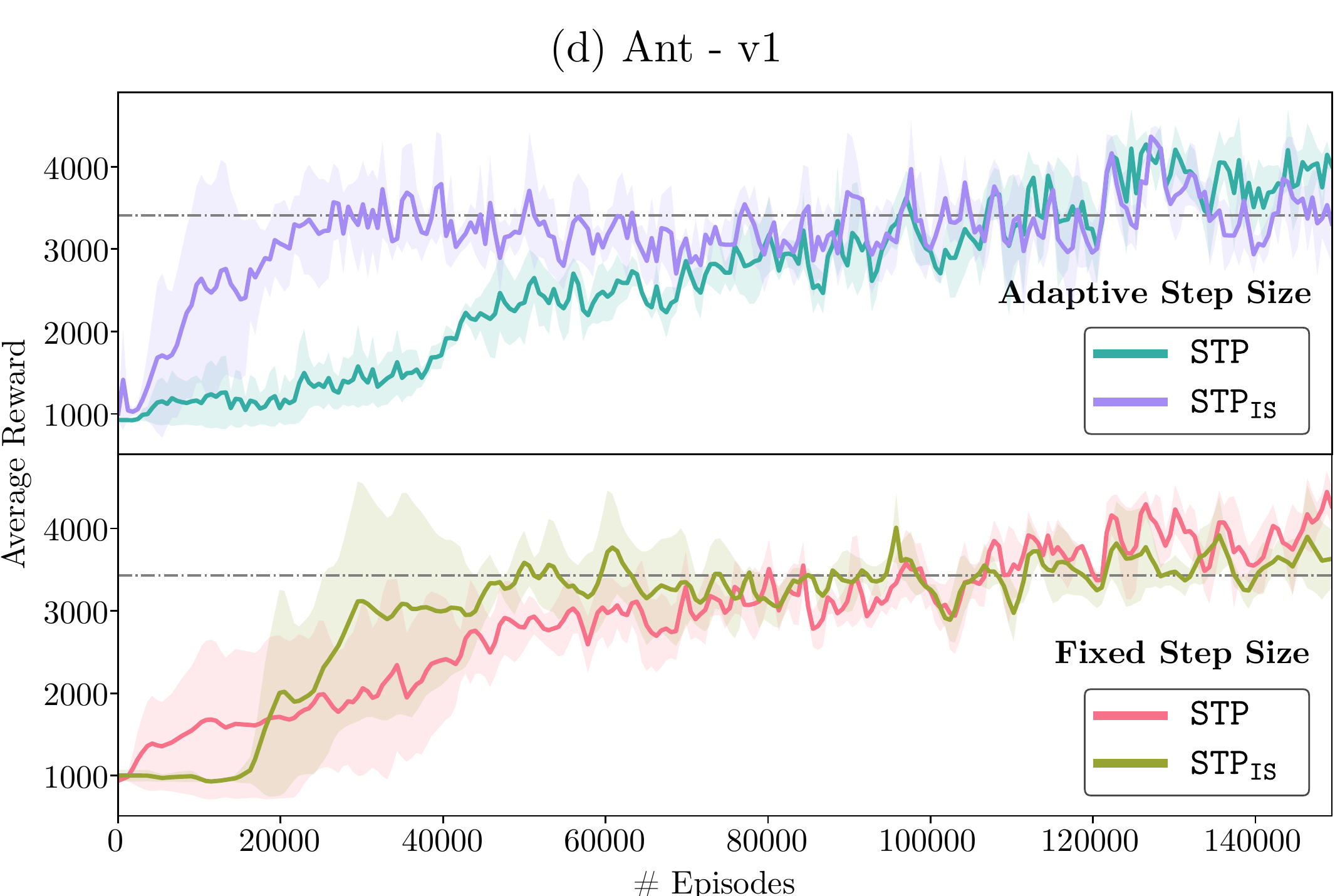} &
\includegraphics[width=0.30\textwidth]{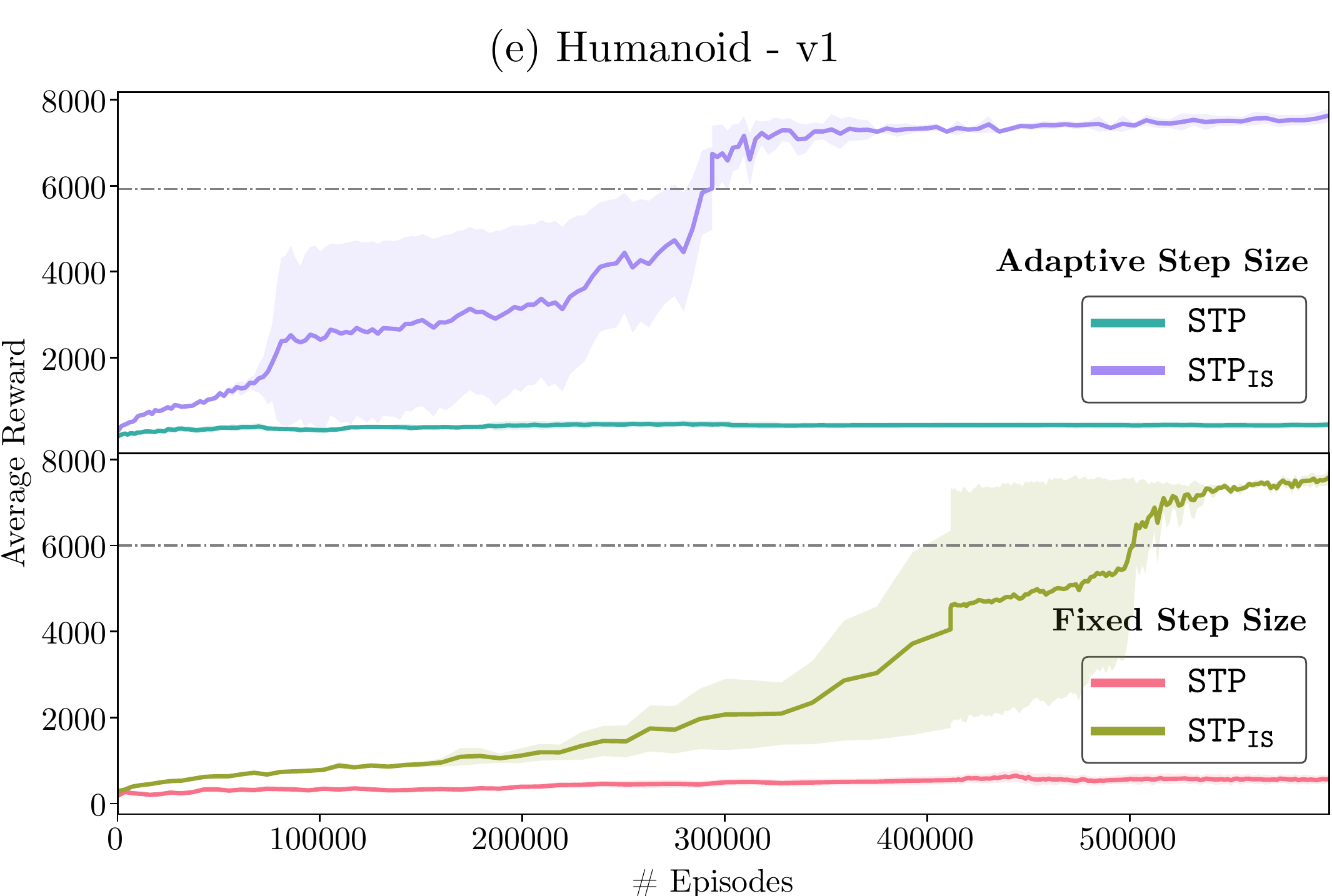} 
\end{tabular}
\caption{ \texttt{STP}$_{\text{\texttt{IS}}}$ is superior to \texttt{STP} on all 5 different MuJoCo tasks. Solid plots are the average performance over 5 runs while the shaded region shows the standard deviation. Dashed horizontal lines are the thresholds used in Table~\ref{tab:rl} to quantify sample complexity of each method.}
\label{fig::rl}
\end{figure*}

\begin{table*}[t]
\vspace{-2mm}
\caption{For each MuJoCo task, we report the average number of episodes required to achieve a predefined reward threshold. Results for our method is averaged over five random seeds, the rest is copied from \cite{Mania_2018} (N/A means the method failed to reach the threshold. UNK means the results is unknown since they are not reported in the literature.)}
  \centering
 \resizebox{\textwidth}{!}{
\begin{tabular}{rccccccccc}
  \toprule
    &  & \multicolumn{2}{c}{Fixed Step Size} & \multicolumn{2}{c}{Adaptive Step Size} & & & & \\
      \cmidrule(lr){3-4} \cmidrule(lr){5-6}
    & Threshold	& \texttt{STP} & \texttt{STP}$_{\text{\texttt{IS}}}$ & \texttt{STP} & \texttt{STP}$_{\text{\texttt{IS}}}$ & ARS(V1-t) & ARS(V2-t) &	NG-lin	& TRPO-nn \\
    \midrule
    \texttt{Swimmer-v1} & 325 & 320 & 110 & 200 &    90 & 100 & 427 & 1450 & N/A  \\
    \texttt{Hopper-v1} & 3120 & 3970 & 2400  & 3720 &  1870  & 51840  & 1973 & 13920 & 10000 \\
    \texttt{HalfCheetah-v1} & 3430 & 13760 & 4420 & 5040 & 2710 & 8106 & 1707 & 11250 & 4250  \\
    \texttt{Ant-v1} & 3580 & 107220 & 43860 &96980 &26480 & 58133 & 20800  & 39240 & 73500 \\
    \texttt{Humanoid-v1} & 6000 & N/A & 530200 & N/A & 296800 & N/A & 142600  & 130000  & UNK \\
      \bottomrule
\end{tabular}}
\label{tab:rl}
\end{table*}

As seen from Figure~\ref{fig::rl}, our proposed importance sampling version \texttt{STP}$_{\text{\texttt{IS}}}$ significantly improves sample complexity when compared to \texttt{STP}. Moreover, the difference is significant for high dimensional problems like HalfCheetah, Ant and Humanoid. The results suggest that \texttt{STP} fails to scale to very high dimensional problems like Humanoid. Our method tackles this and improves the sample complexity of \texttt{STP}. Such results also suggest that it is feasible to estimate the coordinate-wise Lipschitz gradient constants, detailed in Assumption \ref{ass:M-smooth}, of a complicated non-convex function using a data-driven approach. An interesting conclusion from Figure~\ref{fig::rl} is that adaptive step size performs better than fixed step size even after a large hyper-parameter search, particularly, for higher dimensional problems.

In order to compare our method with the existing state-of-art DFO and policy gradient methods, we also tabulate the sample complexity of our method and several existing baselines. Similar to \cite{Mania_2018}, we compute the average number of episodes needed to reach a predefined threshold. Although there are many DFO and policy gradient methods in literature, we report ARS \cite{Mania_2018} as a representative DFO method since it outperforms other baselines. As for policy gradient approaches, we report TRPO \cite{schulman2015trust} as a representative policy gradient method since it is widely used in the community. Moreover, we use NG \cite{Rajeswaran_2017} as a policy gradient method using linear policies. We list sample complexity results for all methods and tasks in Table~\ref{tab:rl}.

As the results suggest, \texttt{STP} is competitive with existing solutions for low dimensional problems (Swimmer, Hopper and HalfCheetah) whereas it underperforms existing solutions for Ant and fails to solve the Humanoid problem. Our theoretically proposed importance sampling version \texttt{STP}$_{\text{\texttt{IS}}}$ significantly improves \texttt{STP} and results in a performance either competitive with or better than existing baselines in all problems except for Humanoid. Although our method successfully solves the Humanoid problem, it has worse sample complexity than other solutions. Hence, scaling \texttt{STP} to very large dimensional continuous control problems (e.g. \texttt{Humanoid-v1} state space has more than 1000 dimensions) is still an open problem. Moreover, for lower dimensional problems like (Swimmer and Hopper), our method outperforms all existing methods.

An interesting question is whether we can use first order methods utilizing the estimated function. In order to estimate the Lipschitz smoothness constant of the function, we utilize a parametric family and one can argue that its gradients can be used as a surrogate gradient in first order optimization. We compare our method with a simple first order baseline using the surrogate gradient in SGD; it fails in all environments. Hence, we do not show the results in the paper.

\section{Conclusion} \label{sec:conc}
We propose and analyze a DFO algorithm with importance sampling \texttt{STP}$_{\text{\texttt{IS}}}$ enjoying the best known complexity bounds known in DFO literature. Experiments on ridge regression and squared SVM objectives for both synthetic and LIBSVM datasets demonstrate the superiority of \texttt{STP}$_{\text{\texttt{IS}}}$ over its uniform version. We also conduct experiments on a collection of continuous control tasks on several MuJoCO environments. We are orders of magnitudes better than the uniform sampling and comparable or better than the state-of-art methods.

\noindent\textbf{Acknowledgments.} This work was supported by the King Abdullah University of Science and Technology (KAUST) Office of Sponsored Research.

\bibliographystyle{aaai}
\bibliography{references.bib}

\appendix
\onecolumn

\section{Preliminaries}\label{sec:preliminaries}

We establish the key lemma which will be used to prove the theorems stated in the paper.
\begin{lemma}\label{lemma:key_lem} 
If $f$ satisfies Assumption \ref{ass:M-smooth} and following the \texttt{STP}$_{\text{\texttt{IS}}}$ update, the following holds:
  \begin{align*}   
    \mathbb{E}\left[f(x_{k+1}) \;|\; x_k\right] \leq f(x_k) -  \mathbb{E} \left[\alpha_{i_k} | \langle \nabla f(x_k), e_{i_k} \rangle| \;|\; x_k\right] + \frac{1}{2} \mathbb{E} \left[\alpha_{i_k}^2 L_{i_k} \;|\; x_k\right].
  \end{align*}
\end{lemma}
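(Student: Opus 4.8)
The plan is to condition on both $x_k$ and the randomly sampled coordinate $i_k$, exploit the greedy selection rule defining $x_{k+1}$, and then average over $i_k$. First I would note that since $x_{k+1}$ is chosen as $\arg\min\{f(x_k), f(x_+), f(x_-)\}$ with $x_\pm = x_k \pm \alpha_{i_k} e_{i_k}$, we trivially have the (conditional, given $x_k$ and $i_k$) bound $f(x_{k+1}) \le \min\{f(x_+), f(x_-)\}$. The point $f(x_k)$ in the argmin is not needed for this particular estimate.

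Next I would invoke Assumption~\ref{ass:M-smooth} along the coordinate direction $e_{i_k}$ twice: once with step $t = \alpha_{i_k} > 0$ to control $f(x_+)$, and once with $t = -\alpha_{i_k}$ to control $f(x_-)$. This yields
\[
f(x_\pm) \;\le\; f(x_k) \pm \alpha_{i_k}\,\nabla_{i_k} f(x_k) + \tfrac{1}{2}\alpha_{i_k}^2 L_{i_k}.
\]
Taking the smaller of the two right-hand sides and using the identity $\min\{g,-g\} = -|g|$ together with $\alpha_{i_k} > 0$ gives, still conditionally on $x_k$ and $i_k$,
\[
f(x_{k+1}) \;\le\; f(x_k) - \alpha_{i_k}\,|\nabla_{i_k} f(x_k)| + \tfrac{1}{2}\alpha_{i_k}^2 L_{i_k}.
\]
Rewriting $\nabla_{i_k} f(x_k) = \langle \nabla f(x_k), e_{i_k}\rangle$ and then taking the conditional expectation over the draw of $i_k$ given $x_k$ (tower property), with $\alpha_{i_k}$ treated as the deterministic-given-$i_k$ quantity from step~2 of the algorithm, produces exactly the claimed inequality.

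I do not anticipate a genuine obstacle: the only points that need care are (i) keeping $\alpha_{i_k}$ strictly positive so that $t = -\alpha_{i_k}$ is a legitimate argument in the coordinate-wise descent inequality and the sign manipulation $\min\{+g,-g\} = -|g|$ is valid, and (ii) being explicit that the expectations in the statement are taken over $i_k$ conditioned on $x_k$, so that $\alpha_{i_k}$ and $L_{i_k}$ remain inside the expectation. No convexity or boundedness-from-below is used here — those enter only in the downstream theorems.
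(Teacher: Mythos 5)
Your proposal is correct and follows essentially the same route as the paper's proof: bound $f(x_{k+1})$ by $\min\{f(x_+),f(x_-)\}$, apply the coordinate-wise smoothness inequality with $t=\pm\alpha_{i_k}$ to obtain $f(x_{k+1})\le f(x_k)-\alpha_{i_k}|\langle\nabla f(x_k),e_{i_k}\rangle|+\tfrac{1}{2}\alpha_{i_k}^2L_{i_k}$, and take conditional expectation over $i_k$ given $x_k$. The paper's version is just a terser rendering of the same argument, so no further comparison is needed.
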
 
\begin{proof}
Since
\begin{align*}
    f(x_{k+1}) &\leq \min \{f(x_k + \alpha_{i_k} e_{i_k}), f(x_k - \alpha_{i_k} e_{i_k})\}  \leq f(x_k) - |\alpha_{i_k} \langle \nabla f(x_k),e_{i_k} \rangle | + \frac{\alpha_{i_k}^2 L_{i_k}}{2}.
\end{align*}
Then the result follows by taking conditional expectation on $x_k$.
\end{proof}

\section{Estimating the Lipschitz Smoothness Constant}
As we explain in the Section~\ref{sec:exper}, we do not have an access to the Lipschitz smoothness constants of the function for the continuous control case. Instead, we estimate them using the points which have been evaluated. Our experiments suggest that, estimating the function directly using a parametric family works better than estimating smoothness constants directly. In other words, we estimate the function to be optimized directly using a parametric family ($\hat{f}(\cdot, \theta)$) and use its Lipschitz smoothness constants as an estimate. 

Consider the DFO step $n$; using the queried sampled $\{x_i, f(x_i)\}_{i \in [n-1]}$, we can estimate the function of interest by solving empirical risk minimization problem as: 
\begin{equation}
    \theta_n = \arg\min_\theta \sum_i  (f(x_i) - \hat{f}(x_i, \theta))^2
\end{equation}

We use the resulting $\hat{f}(\cdot, \theta_n)$ to compute Lipschitz smoothness constants and importance sampling weights. The parametric family we consider is a multi-layer perceptron with single hidden layer and tanh non-linearity. Input dimension is the policy size, output dimension is 1 and hidden layer dimension is chosen as 16 for \texttt{Swimmer-v1} and \texttt{Hopper-v1}, 64 for \texttt{HalfCheetah-v1} and \texttt{Ant-v1}, and 256 for \texttt{Humanoid-v1}. Learning has been performed using ADAM \cite{Kingma_2014} optimizer at each iteration. Step size (learning rate) has been chosen as $10^{-3}$ for all experiments. At each iteration we choose a $32$ random samples in uniform from $x_0,\dots,x_{n-1}$ and use it as a batch.

\end{document}